\newtheorem{lem}{Lemma}[section]
\newtheorem{thm}{Theorem}
\newtheorem{prop}[lem]{Proposition}
\newtheorem{cor}[lem]{Corollary}
\newtheorem{rem}[lem]{Remark}
\newenvironment{proof}%
 {\begin{trivlist} \item[]{\bf Proof. }}%
 {\hspace*{\fill}$\rule{.4\baselineskip}{.4\baselineskip}$\end{trivlist}}
\makeatletter\@addtoreset{figure}{section}\makeatother
\makeatletter \@addtoreset{equation}{section} \makeatother
\newcommand{\rmO}{\mathrm{O}}
\newcommand{\rmo}{\mathrm{o}}
\newcommand{\rmd}{\mathrm{d}}
\newcommand{\rme}{\mathrm{e}}
\newcommand{\rmi}{\mathrm{i}}
\newcommand{\Rg}{\mathrm{Rg}\,}
\renewcommand{\leq}{\leqslant}
\renewcommand{\geq}{\geqslant}
\def\XXint#1#2#3{{\setbox0=\hbox{$#1{#2#3}{\int}$}
     \vcenter{\hbox{$#2#3$}}\kern-.5\wd0}}
\newcommand{ \Real} { \mathrm{Re}\,}
\newcommand{ \Imag} { \mathrm{Im}\,}
\newcommand{\bk}{ {\bf k} }
\newcommand{\eps}{\varepsilon}
\newcommand{\bbR}{\mathbb{R}}
\newcommand{\bbN}{\mathbb{N}}
\newcommand{\mmH}{\mathcal{H}}
\newcommand{\mmU}{\mathcal{U}}
\newcommand{\mmR}{\mathcal{R}}
\newcommand{\mmM}{\mathcal{M}}
\newcommand{\mmL}{\mathcal{L}}
\newcommand{\mmD}{\mathcal{D}}
\newcommand{\mmX}{\mathcal{X}}
\newcommand{\mmY}{\mathcal{Y}}
\title{Deformation of Striped Patterns by Inhomogeneities}
\author{Gabriela Jaramillo and Arnd Scheel\footnote{GJ and AS acknowledge support by the National Science
Foundation through grants DMS-0806614 and DMS-1311740.}
 \\
University of Minnesota \\
School of Mathematics \\
127 Vincent Hall, 206 Church St SE \\
Minneapolis, MN 55455, USA}
\begin{document}

\maketitle

\begin{abstract}
We study the effects of adding a local perturbation in a pattern forming system, taking as an example the Ginzburg-Landau equation with a small localized inhomogeneity in two dimensions. Measuring the response through the linearization at a periodic pattern, one finds an unbounded linear operator that is not Fredholm due to continuous spectrum in typical translation invariant or weighted spaces. We show that Kondratiev spaces, which encode algebraic localization that increases with each derivative, provide an effective means to circumvent this difficulty. We establish Fredholm properties in such spaces and use the result to construct deformed periodic patterns using the Implicit Function Theorem. We find a logarithmic phase correction which vanishes for a particular spatial shift only, which we interpret as a phase-selection mechanism through the inhomogeneity.
\end{abstract}

\section{Introduction}

\subsection{Pattern-forming systems}
Periodic, stripe-like patterns emerge in a self-organized fashion in a variety of experiments, ranging from Rayleigh-B\'enard convection to open chemical reactors. Such regular, periodic patterns are usually studied in domains with idealized periodic boundary conditions, where existence and stability can be readily obtained using classical methods of bifurcation theory. The simplest example for such pattern-forming systems is the Swift-Hohenberg equation
\[
u_t=-(\Delta +1)^2 u +\delta^2 u - u^3,\qquad (x,y) \in \bbR^2,
\]

which is known to possess periodic patterns of the form 
\[
u(x,y)=u_*(kx;k), u_*(\xi+2\pi;k)=u_*(\xi;k),
\]
with $k\sim 1$,
for  small $\delta$. 

Beyond periodic boundary conditions, the dynamics for $\delta\sim 0$ can be approximated using the amplitude equation formalism. In the case of the (isotropic) Swift-Hohenberg equation, one finds the Newell-Whitehead-Segel equation
\[
A_T=-(\partial_X-\rmi\partial_{YY})^2 A + A-A|A|^2,
\]
using an Ansatz
\[
u(x,y,t)=\delta A(\delta x,\delta y, \delta^2 t)\rme^{\rmi x}+c.c.,
\]

and expanding to order $\delta^3$, as a solvability condition \cite{cross1993pattern,mielke2002ginzburg}. There are several difficulties with the NWS equations and their validity as an approximation \cite{schneider1995validity}, related to the fact that the original equation is isotropic, while the expansion singles out a preferred wave vector, here the vector $k=(1,0)^T$. The situation is simplified in anisotropic pattern-forming systems such as 
\[
u_t = -(\Delta +1)^2u+\partial_{yy}u +\delta^2 u - u^3,
\]
where a similar Ansatz leads to the isotropic (sic!) Ginzburg-Landau equation
\begin{equation}\label{GL}
A_T=\Delta  A + A-A|A|^2,
\end{equation}
possibly after rescaling $X$ and $Y$. 

More drastically, one can approximate the dynamics near periodic patterns using an Ansatz
\[
u(t,x,y)=u_*(\Phi(\delta x,\delta y,\delta^2 t);|\nabla \Phi|),
\]
where one obtains as a compatibility condition the phase-diffusion equation 
\[
\Phi_T=\Delta_{X,Y}\Phi,
\]
for suitable values of the wavenumber $|\nabla \Phi|$, after possibly rescaling $X,Y$. 

Both, amplitude  and phase-diffusion equations can be shown to be good approximations under suitable choices of initial conditions, on time scales $T=\rmO(1)$; see for instance \cite{doelman2009dynamics,mielke2002ginzburg} and references therein.

\subsection{Inhomogeneities}

Local impurities in experiments sometimes have minor, sometimes more dramatic effects on the resulting patterns. It is known, for instance, that target-like patterns can nucleate at impurities in the Belousov-Zhabotinsky reaction; see \cite{kollar2007coherent}  for an analysis in this direction. Also, spiral wave anchoring at impurities such as arteries can have dramatic impact on excitable media \cite{munuzuri1998attraction}. Effects in Swift-Hohenberg-like systems appear to be more subtle and are the main focus of our present study. We focus on the somewhat simpler case of the isotropic GL equation \eqref{GL}, modeling anisotropic pattern-forming systems, with an added localized inhomogeneity,
\begin{equation}\label{e:cgli}
A_T=\Delta  A + A-A|A|^2+\varepsilon g(x,y).
\end{equation}

We intend to study inhomogeneities in the isotropic SH equation 
\[
u_t=-(\Delta +1)^2u+\delta^2 u - u^3+\varepsilon g(x,y),
\]
in future work. 

In order to illustrate the difficulties that arise, consider the more dramatic simplification of the phase-diffusion approximation with an inhomogeneity \footnote{We stress however that one cannot derive such an approximation in the presence of inhomogeneities due to the different scalings of $\Phi$ and $g$.},
\[
\Phi_t=\Delta \Phi+\varepsilon g(x,y).
\]
Stationary patterns here are solutions to the Poisson equation $\Delta\Phi=-\varepsilon g(x,y)$, with solutions exhibiting logarithmic growth at infinity. Thinking of the phase-diffusion as an approximation to a larger system, one would like a robust way of solving the stationary equation, if possible relying on an implicit function theorem. The difficulty here is that the Laplacian is not invertible on $L^2$, say, not even Fredholm as an unbounded operator. 

A remedy, in this context, are spaces with algebraic weights, which we refer to as Kondratiev spaces. To be precise, define $\langle \mathbf{x}\rangle=\sqrt{x^2+y^2+1}$ and  $M^{2,p}_{\gamma}$ as the completion of $C_0^{\infty}$ in the norm
\[
\|u\|_{M^{2,p}_{\gamma}}:=\sum_{j+k= 2}\|\langle {\bf  x} \rangle^{\gamma+2}\partial^j_x\partial^k_yu \|_{L^p}
+
\sum_{j+k= 1}\| \langle {\bf x} \rangle^{\gamma+1}\partial^j_x\partial^k_yu\|_{L^p}
+\| \langle {\bf x} \rangle^{\gamma} u\|_{L^p}.
\]
Note that the algebraic weights increase with the number of derivatives, making the different parts of the norms scale in the same fashion, as opposed to norms in the classical Sobolev spaces $W^{2,p}_{\gamma}$, with norm 
\[
\|u\|_{W^{2,p}_\gamma}:=\sum_{j+k\leq 2}\|\langle {\bf x} \rangle^{\gamma} \partial^j_x\partial^k_yu \|_{L^p}.
\]
It turns out that the Laplacian is Fredholm for suitable $\gamma$ on $M^{2,p}_{\gamma}$ \cite{McOwen}, albeit with negative index in dimension 2 when $\gamma>0$. This negative index makes ``explicit'' far-field corrections via logarithmic terms, just as seen in the Green's function of the Laplacian, necessary when describing the far-field effect of localized inhomogeneities on periodic patterns. This result also holds for a certain class of elliptic operators with coefficients that decay sufficiently fast at infinity \cite{lockhart1981fredholm}, but we are not aware of results for elliptic operators without scaling invariance. This represents a difficulty when looking at the linearization of \eqref{e:cgli}, which for $k=0$ decouples into $\Delta$ which is, and $\Delta -I$, which is not scale invariant. For $k \neq 0$ these components couple and simple scale invariance is lost. Furthermore, the linearization is in general not a small or compact perturbation of a scale invariant operator.

\subsection{Main results}

To state our main results, we consider the stationary solutions of \eqref{e:cgli},
\begin{equation}\label{stationaryGL}
0=\Delta A + A - A|A|^2+\varepsilon g(x,y).
\end{equation}
For $\varepsilon=0$, the system possesses ``stripe patterns'' 
\[
A(x,y)=\sqrt{1-k^2}\rme^{\rmi k x},
\]
for wavenumbers $|k|<1$. Those solutions are linearly stable for $|k|<1/\sqrt{3}$ and unstable for $|k|>1/\sqrt{3}$. The instability mechanism is known as the Eckhaus (sideband) instability. We are now ready to state our main result.

\begin{thm}\label{MainTh1}

Fix $k$ with $|k|<1/\sqrt{3}$ and suppose that $g\in W^{2,2}_{\beta}$ for some $\beta>2$. Then there exists a family of solutions to \eqref{stationaryGL},
\[
A(x,y;\varepsilon,\varphi)=S(x,y;\varepsilon,\varphi)\rme^{\rmi\Phi(x,y;\varepsilon,\varphi)},
\]
with $A(x,y;0,\varphi)=\sqrt{1-k^2}\rme^{\rmi (k x+\varphi)}$. Moreover, $A(x,y;\varepsilon,\varphi)$ is smooth in all variables and satisfies the following expansions in $x,y$ for fixed $\varepsilon,\varphi$,
\begin{align}
S(x,y;\eps,\varphi)&\to \sqrt{1-k^2}\\
\Phi(x,y;\eps,\varphi)-kx-\frac{c(\eps,\varphi)}{ 2k \sqrt{1-k^2}}  \log(\alpha x^2+y^2)&\to  \Phi_\infty(\eps)+\varphi,
\end{align}
for $|\mathbf{x}|\to\infty$, with $\alpha=\frac{1-k^2}{1-3k^2}$, for some smooth function $c(\eps,\varphi)$ with expansion
\[
c(\eps,\varphi)=\eps c_1(\varphi)+\rmO(\eps^2),
\]
where
\[
c_1(\varphi)=\frac{\sqrt{1-3k^2}}{\pi(1-k^2)} \iint \Imag[g(x,y)\rme^{-\rmi(kx+\varphi)}]  \rmd x \rmd y.
\]

\end{thm}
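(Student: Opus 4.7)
We work in amplitude--phase coordinates, $A=(r_0+s)\rme^{\rmi(kx+\varphi+\phi)}$ with $r_0:=\sqrt{1-k^2}$, and treat $(s,\phi)$ as real unknowns. Substituting into \eqref{stationaryGL} and dividing by $\rme^{\rmi(kx+\varphi+\phi)}$, the real and imaginary parts yield a coupled nonlinear system whose linearization at $(s,\phi,\varepsilon)=(0,0,0)$ is
\[
\mathcal{L}\begin{pmatrix}s\\ \phi\end{pmatrix}=\begin{pmatrix}(\Delta-2r_0^2)s-2kr_0\,\partial_x\phi\\[4pt] 2k\,\partial_x s+r_0\Delta\phi\end{pmatrix},
\]
with $-\varepsilon(\Real[g\rme^{-\rmi(kx+\varphi+\phi)}],\Imag[g\rme^{-\rmi(kx+\varphi+\phi)}])^{\mathrm{T}}$ on the right-hand side and everything else manifestly quadratic in $(s,\nabla\phi)$. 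The massive operator $\Delta-2r_0^2$ is an isomorphism on weighted Sobolev spaces, its spectrum being bounded away from zero, so we can invert the first row and perform a Schur-complement reduction. The resulting effective scalar operator on $\phi$ has long-wave principal part $r_0(\alpha^{-1}\partial_{xx}+\partial_{yy})$ with $\alpha=(1-k^2)/(1-3k^2)$: the Eckhaus phase-diffusion operator. The hypothesis $|k|<1/\sqrt3$ is precisely what renders it elliptic; the affine rescaling $X=\sqrt\alpha\,x$, $Y=y$ turns it into the flat Laplacian $\Delta_{X,Y}$, for which $\log(\alpha x^2+y^2)=\log(X^2+Y^2)$ is the fundamental-solution profile.

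As emphasized in the introduction, $\Delta_{X,Y}$ is not Fredholm on standard $L^p$-based weighted spaces, but by \cite{McOwen} it \emph{is} Fredholm of negative index between the Kondratiev spaces $M^{2,2}_{\beta-2}$ and $L^2_{\beta}$ for admissible $\beta>0$, with trivial kernel and a one-dimensional cokernel paired against the functional $f\mapsto\iint f$. To compensate that cokernel I would fix a smooth radial cutoff $\chi$ equal to $1$ for $|\mathbf x|\geq 2$ and $0$ near the origin, and impose the ansatz
\[
\phi(x,y)=\frac{c}{2kr_0}\,\chi(x,y)\log(\alpha x^2+y^2)+\tilde\phi(x,y),
\]
with $\tilde\phi\in M^{2,2}_{\beta-2}$ and a new scalar unknown $c\in\mathbb{R}$. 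The function $r_0(\alpha^{-1}\partial_{xx}+\partial_{yy})[\chi\log(\alpha x^2+y^2)]$ is compactly supported and, by a boundary-flux computation, has nonzero total integral, so adjoining $c$ restores surjectivity of the phase block. The kernel of the phase diffusion (constants) is not in $M^{2,2}_{\beta-2}$ and is absorbed separately by the external phase parameter $\varphi$.

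The stationary problem can then be written as a smooth map
\[
F:W^{2,2}_\beta\times M^{2,2}_{\beta-2}\times\mathbb{R}\times\mathbb{R}\times\mathbb{R}\longrightarrow L^2_\beta\oplus L^2_\beta,\qquad F(s,\tilde\phi,c;\varepsilon,\varphi)=0,
\]
with $F(0,0,0;0,\varphi)=0$. The assumption $\beta>2$ ensures that $g\rme^{-\rmi(kx+\varphi+\phi)}$ lies in the target for every admissible $\tilde\phi$ and makes the Nemytskii remainders smooth. By the Schur reduction above $D_{(s,\tilde\phi,c)}F(0,0,0;0,\varphi)$ is an isomorphism for each $\varphi$, and the Implicit Function Theorem delivers a unique smooth family $(s,\tilde\phi,c)(\varepsilon,\varphi)$ vanishing at $\varepsilon=0$; this yields the claimed far-field expansions of $S=r_0+s$ and $\Phi=kx+\varphi+\phi$. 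The formula for $c_1(\varphi)$ is read off at order $\varepsilon^1$: integrating the reduced phase equation over $\mathbb{R}^2$, passing to the $(X,Y)$-variables and matching the logarithmic boundary flux identifies $c$ as a scalar multiple of $\iint\Imag[g\rme^{-\rmi(kx+\varphi)}]\,\rmd x\,\rmd y$, with the specific prefactor $\sqrt{1-3k^2}/(\pi(1-k^2))$ coming from the Jacobian $\sqrt\alpha$ and the Green's-function normalization of $\Delta_{X,Y}$.

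The main technical obstacle is the Fredholm step. The full operator $\mathcal L$ carries non-scale-invariant lower-order terms (the mass $-2r_0^2$ and the drift $2k\partial_x$), so the McOwen--Lockhart theorem \cite{McOwen,lockhart1981fredholm} does not apply to $\mathcal L$ as it stands. The purpose of the Schur reduction is precisely to isolate a scale-invariant principal part, to which \cite{McOwen} does apply; one must then verify that the remaining corrections and the $\varepsilon$-, $\varphi$-, $\phi$-dependent Nemytskii terms act as relatively compact or contractive perturbations on the chosen Kondratiev scale. Establishing that compactness on $W^{2,2}_\beta\times M^{2,2}_{\beta-2}$ is the non-routine core of the linear analysis underlying the IFT.
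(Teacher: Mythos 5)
Your high-level strategy (amplitude--phase coordinates, Kondratiev spaces, a logarithmic far-field correction with a scalar unknown $c$, a bordering argument, the Implicit Function Theorem, and a flux computation for $c_1$) matches the paper's, but two steps that you treat as routine are precisely where the paper has to work hardest, and as written your version of each would fail.

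First, your functional-analytic setup does not close. With $s\in W^{2,2}_{\beta}$ and $\tilde\phi\in M^{2,2}_{\beta-2}$, the cross-coupling terms spoil the weights: $\partial_x\tilde\phi$ only lies in $L^2_{\beta-1}$, one power short of the target $L^2_{\beta}$ you assign to the amplitude row, and conversely the amplitude equation only delivers $\partial_x s$ with one power of weight less than the phase row requires. This mismatch is exactly the obstruction the paper isolates in Section \ref{summary} ("the coupling destroys the linear scaling invariance in the $\phi$-equation"), and its resolution there is not cosmetic: the system is \emph{extended} by the derivative variables $u=\partial_{xx}s$, $v=\partial_{xy}s$, $w=\partial_{yy}s$, $\psi,\theta\propto\nabla\phi$, and the equations are differentiated, so that the second derivatives of $s$ are controlled directly in $L^2_{\gamma+2}$ on the compatibility subspaces $\mmD\to\mmR$. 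Your proposal has no substitute for this. Moreover the weight $\beta>2$ belongs to the \emph{data} $g$, not to the unknowns: the far-field response (e.g.\ $P_1\sim 1/|\mathbf{x}|$ and $\partial_x[\chi\log(\alpha x^2+y^2)]\sim 1/|\mathbf{x}|$) does not lie in $L^2_{\beta}$ for $\beta>2$; the paper works with $\gamma\in(0,1)$ for the unknowns and $\beta>\gamma+2$ for $g$.

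Second, the Fredholm step. The Schur complement you form is $\mmL=\Delta+b(\Delta-a)^{-1}\partial_{xx}$, whose correction is a bounded, nonlocal, zeroth-order operator that does not decay at infinity; it is neither relatively compact nor small (no smallness of $k$ is assumed), so your fallback of checking "relatively compact or contractive perturbations" cannot succeed. The paper instead factors $\mmL=\mmM\bigl(\Delta-\tfrac{b}{a}\partial_{xx}\bigr)$ and proves by explicit symbol estimates and interpolation that the Fourier multiplier $\mmM$ is an isomorphism of $(I-Q)L^2_{\gamma+2}$; this factorization is the core linear input and is absent from your argument. Relatedly, your cokernel count is wrong: for $\gamma\in(0,1)$ in dimension $2$ the Laplacian $M^{2,2}_{\gamma}\to L^2_{\gamma+2}$ has a \emph{three}-dimensional cokernel (pairings against $1,x,y$), not one. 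Reducing to a single solvability condition --- so that one scalar $c$ suffices --- requires the compatibility bookkeeping of Corollary \ref{FredholmT2} (the pairings of $f_4,f_5$ against $e_j^*$ vanish because they are second derivatives, and $\int f_2\cdot x=\int f_3\cdot y$ follows from $\partial_yf_2=\partial_xf_3$). Finally, the leading-order formula for $c_1$ is obtained in the paper by pairing the \emph{differentiated} phase equation against $x$, which is the integrated-by-parts version of the flux computation you describe; that part of your sketch is sound in spirit once the correct cokernel element is identified.
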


\begin{rem}

\begin{enumerate}
\item Our approach gives more detailed expansions than stated. In fact, we obtain a decomposition of $S$ and $\Phi$ into a localized part that is smooth in $\varepsilon$, uniformly in $\mathbf{x}$, and an explicit logarithmic far-field correction with coefficient $c(\eps,\varphi)$; see the Ansatz \eqref{Ansatz}. Also, in the class of functions with this particular form, the solutions described in the theorem are locally unique. 

\item The expression for $c_1(\eps,\varphi)$ is reminiscent of a projection onto the kernel. Indeed, the integral represents the scalar product $(u,v)=\Real \int u\bar{v}$ of the perturbation $\eps g$ and the ``kernel'' of the linearization induced by the infinitesimal  phase rotation, $\frac{\rmd}{\rmd\varphi}\rme^{\rmi {kx+\varphi}}$. Vanishing of such a scalar product indicates persistence of solutions in problems where the linearization is Fredholm, such as in the Melnikov analysis for homoclinic bump-like solutions. 

In fact,  $c_1$ possesses at least 2 zeroes. Assuming that $c_1(\varphi_*)=0$, $c_1'(\varphi_*)\neq 0$, we can find $\varphi_*(\eps)$ so that
\[
c(\eps,\varphi_*(\eps))=0.
\]
Inspection of the expansion in the theorem shows that $\varphi$ is the phase shift of the underlying pattern in the far field. For these specific values of $\varphi$, the correction in the far field to the periodic pattern is bounded and small, while for other values of $\varphi$ the correction is unbounded in the phase. We interpret this result by referring to $\varphi_*(\eps)$ as the \emph{selected phase}. In other words, introducing inhomogeneities induces a selected phase shift of the primary pattern which accommodates stationary solutions without logarithmic corrections. Numerical simulations confirm this phenomenon, with a diffusive spread of the phase shift in the domain. It would be interesting to establish this diffusive convergence to a selected phase analytically.

\item Very similar results hold in space dimensions 1 and 3. In one space dimension, the analysis is easier since ODE methods can be used to analyze stationary solutions. In fact, the analysis reduces to a Melnikov analysis for the intersection of center-stable and center-unstable manifolds. In this one-dimensional context, the analysis also immediately carries over to the Swift-Hohenberg equation. On the other hand, the 3-dimensional case is easier than the two-dimensional case considered here since the corrections needed to compensate for negative Fredholm indices are decaying like $1/|\mathbf{x}|$. In fact, the Laplacian is invertible for suitable weights $\gamma$ in Kondratiev spaces in $\bbR^3$.

\end{enumerate}

\end{rem}

The structure of this paper is as follows. In Section \ref{weightedspaces} we give a more detailed description of weighted Sobolev spaces and Kondratiev spaces, and state Fredholm properties of the Laplacian in this setting. Next, in Section \ref{summary}, we summarize the procedure leading up to the main result, first explaining the difficulties encountered when analyzing the linearization of equation \eqref{stationaryGL} about stripe patterns, and then describing the linear operator $T$ that we use to overcome these difficulties. In Section \ref{sectionlinear} we use the results from Section \ref{weightedspaces} to explore the Fredholm properties of this last operator and show that by adding logarithmic corrections we can obtain an invertible operator $\hat{T}$. Section \ref{sectionnonlinear} establishes properties of the nonlinearity in our functional analytic setting. We show that the full operator $\hat{F}$  is  well defined, continuously differentiable, with invertible linearization
$\hat{T}$. Finally, in Section \ref{Mainresult}, we prove our main result using the Implicit Function Theorem.

\section{Fredholm properties of the linearization and weighted spaces} \label{weightedspaces}

This section is intended as a summary of theorems and results that describe weighted spaces and their properties. These results are the basis for the analysis in the following sections and will allow us to conclude Fredholm properties of the linearized operators considered therein.

\subsection{Weighted Sobolev spaces}
Throughout the paper we will use the symbol $W^{s,p}_{\gamma}$ to denote  weighted Sobolev spaces, which we define as the completion of $C^{\infty}_0(\bbR^n)$ under the norm
\[ \| u\|_{W^{s,p}_{\gamma}} = \sum_{|\alpha|\leq s} \| \langle {\bf x} \rangle^{\gamma} D^{\alpha} u \|_{L^p}.\]

Here, $\langle {\bf x} \rangle = ( 1 + |{\bf x}|^2)^{1/2}$, ${\bf x}=(x,y)$, $\gamma \in \bbR$, and $s$ is a positive integer. 

We start with a generalization of the invertibility of the operator $\Delta- I$ to weighted spaces.

\begin{prop}\label{Sobolev}
The operator $\Delta -a : W_{\gamma}^{2,p} \rightarrow L^p_{\gamma}$ is invertible for all real numbers $a >0$ and $p \in [1, \infty)$. 
\end{prop}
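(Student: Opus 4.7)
The operator $\Delta-a$ is a Fourier multiplier with symbol $-(|\xi|^2+a)$, which is smooth, positive, and bounded away from zero. Its inverse is convolution with the Bessel (Yukawa) kernel
\[
G_a(\mathbf{x})=c_n|\mathbf{x}|^{(2-n)/2}K_{(n-2)/2}(\sqrt{a}|\mathbf{x}|),
\]
which decays exponentially at infinity and has only a mild singularity at the origin. The natural strategy is to first establish invertibility in the unweighted case and then reduce the weighted case to it by conjugating with the weight.

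\emph{Step 1 (Unweighted bijection).} I would first recall the classical fact that $\Delta-a:W^{2,p}(\bbR^n)\to L^p(\bbR^n)$ is a bijection. For $1<p<\infty$ this follows from Mikhlin-type bounds on the symbol $-(|\xi|^2+a)^{-1}$ (equivalently, Calderón--Zygmund estimates for $G_a$); for $p=2$ the spectral theorem makes it immediate since $\sigma(\Delta-a)\subset(-\infty,-a]$. Injectivity also follows immediately from Fourier analysis: if $(\Delta-a)u=0$ in $\mathcal{S}'$ then $(|\xi|^2+a)\hat u=0$ with $|\xi|^2+a>0$, whence $u\equiv 0$.

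\emph{Step 2 (Conjugation reduces to Step 1).} The multiplication map $M_\gamma u:=\langle\mathbf{x}\rangle^\gamma u$ is a topological isomorphism $W^{s,p}_\gamma\to W^{s,p}$ for $s=0,1,2$, since the Leibniz rule together with $|\partial^\alpha\langle\mathbf{x}\rangle^\gamma|\leq C_\alpha\langle\mathbf{x}\rangle^\gamma$ shows that both $M_\gamma$ and $M_{-\gamma}$ are bounded in the corresponding norms. Hence invertibility of $\Delta-a:W^{2,p}_\gamma\to L^p_\gamma$ is equivalent to that of the conjugated operator
\[
L_\gamma:=M_\gamma(\Delta-a)M_{-\gamma}:W^{2,p}(\bbR^n)\to L^p(\bbR^n).
\]
A direct calculation gives $L_\gamma=(\Delta-a)+\mathbf{b}_\gamma(\mathbf{x})\cdot\nabla+c_\gamma(\mathbf{x})$, where $\mathbf{b}_\gamma(\mathbf{x})=O(\langle\mathbf{x}\rangle^{-1})$ and $c_\gamma(\mathbf{x})=O(\langle\mathbf{x}\rangle^{-2})$.

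\emph{Step 3 (Fredholm plus trivial kernel).} The perturbation $B_\gamma:=\mathbf{b}_\gamma\cdot\nabla+c_\gamma:W^{2,p}\to L^p$ is compact: for a bounded sequence in $W^{2,p}$, the Rellich--Kondrachov theorem provides a subsequence converging in $W^{1,p}_{\mathrm{loc}}$, and the decay of $\mathbf{b}_\gamma,c_\gamma$ at infinity upgrades this to $L^p$-convergence of $B_\gamma u_k$. Therefore $L_\gamma$ is a compact perturbation of the invertible operator $\Delta-a$, hence Fredholm of index zero. To conclude invertibility it remains to show $\ker L_\gamma=\{0\}$: if $L_\gamma v=0$ with $v\in W^{2,p}$, then $u:=M_{-\gamma}v\in\mathcal{S}'(\bbR^n)$ satisfies $(\Delta-a)u=0$, and the Fourier argument from Step 1 gives $u=v=0$. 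Combined with the Fredholm index zero, $L_\gamma$ is bijective, and hence so is $\Delta-a$ via the factorization $\Delta-a=M_{-\gamma}L_\gamma M_\gamma$.

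\textbf{Main obstacle.} The most delicate point is the base case at the endpoint $p=1$, where Calderón--Zygmund/Mikhlin bounds for $D^2 G_a$ are unavailable. The positive zeroth-order term $-a$ with $a>0$ rescues the argument by producing an exponentially decaying Green's function: the $L^p$-estimate on second derivatives can then be obtained from interior elliptic regularity on a lattice of unit balls, summed with the (slowly varying) weight $\langle\mathbf{x}\rangle^\gamma$ treated as essentially constant on each ball. Once Step 1 is secure, Steps 2--3 are standard perturbative arguments that proceed uniformly in $1\leq p<\infty$ and $\gamma\in\bbR$.
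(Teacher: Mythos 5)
Your proposal is correct for $1<p<\infty$ and takes essentially the same route as the paper, whose entire argument is the remark that $L^p_\gamma$ and $L^p$ are conjugate by the weight multiplication operator and that the conjugated operator is a compact perturbation of the unweighted one; your Steps 2--3 spell out exactly this, and the kernel argument via the Fourier transform is sound. The one place your write-up does not close is the endpoint $p=1$, which you correctly flag as the main obstacle: the patch via interior elliptic regularity on unit balls fails for the same reason the Calder\'on--Zygmund bounds do (interior $W^{2,1}$ estimates are false at $p=1$, the second-derivative Bessel potentials being only of weak type $(1,1)$), so surjectivity of $\Delta-a:W^{2,1}\to L^1$ is not established. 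This defect is inherited from the statement itself --- the paper offers no argument at $p=1$ and only ever uses $p=2$ in the sequel.
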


\begin{lem}\label{notFredholm}
 The operator $\Delta_{\gamma}:W^{2,p}_{\gamma} \rightarrow L^p_{\gamma}$ is not a Fredholm operator for $p \in [1, \infty)$.
\end{lem}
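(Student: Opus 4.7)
The plan is to construct a normalized sequence $u_n \in W^{2,p}_\gamma$ with $\|u_n\|_{W^{2,p}_\gamma}=1$ and $\Delta u_n \to 0$ in $L^p_\gamma$, but with no Cauchy subsequence in $W^{2,p}_\gamma$. Such a sequence precludes $\Delta$ from being Fredholm: for if $W^{2,p}_\gamma = \ker\Delta \oplus X_0$ with $X_0$ a closed complement and $\Delta : X_0 \to \mathrm{Range}\,\Delta$ a bounded bijection (as would follow from finite-dimensional kernel and closed range), then decomposing $u_n = k_n + u_{0,n}$ would force $u_{0,n} \to 0$ and leave $(k_n)$ bounded in a finite-dimensional space, yielding a convergent subsequence of $(u_n)$.

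For the construction, fix $\phi \in C_c^\infty(\bbR^2)$, not identically zero, with support in the annulus $\{1<|\mathbf{x}|<2\}$, and set $\phi_n(\mathbf{x})=\phi(\mathbf{x}/n)$, supported in $\{n<|\mathbf{x}|<2n\}$. A change of variables $\mathbf{y}=\mathbf{x}/n$ together with $\langle n\mathbf{y}\rangle\sim n$ on $\mathrm{supp}\,\phi$ gives
\[
\|\langle \mathbf{x}\rangle^\gamma D^\alpha \phi_n\|_{L^p} \sim n^{2/p+\gamma-|\alpha|} \quad \text{for } |\alpha| \leq 2,
\]
so that $\|\phi_n\|_{W^{2,p}_\gamma} \sim n^{2/p+\gamma}$ (driven by the $|\alpha|=0$ term) while $\|\Delta \phi_n\|_{L^p_\gamma} \sim n^{2/p+\gamma-2}$. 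Normalizing $u_n := \phi_n/\|\phi_n\|_{W^{2,p}_\gamma}$ yields $\|u_n\|_{W^{2,p}_\gamma}=1$ and $\|\Delta u_n\|_{L^p_\gamma}=\mathrm{O}(n^{-2})\to 0$.

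Passing to a subsequence $n_k = 3^k$, the supports of $u_{n_k}$ are pairwise disjoint. Since then $D^\alpha u_{n_j}$ and $D^\alpha u_{n_k}$ also have disjoint supports for $j \neq k$, each term in the defining norm satisfies $\|\langle\mathbf{x}\rangle^\gamma D^\alpha(u_{n_j}-u_{n_k})\|_{L^p}\geq \|\langle\mathbf{x}\rangle^\gamma D^\alpha u_{n_j}\|_{L^p}$, and summing gives $\|u_{n_j}-u_{n_k}\|_{W^{2,p}_\gamma}\geq 1$. Hence $(u_{n_k})$ admits no Cauchy subsequence, contradicting the consequence of Fredholmness stated above.

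The main conceptual point is the choice of scaling rather than translation: a pure translate $\phi(\mathbf{x}-\mathbf{x}_n)$ with $|\mathbf{x}_n|\to\infty$ rescales $u_n$ and $\Delta u_n$ by the same factor $\langle \mathbf{x}_n\rangle^\gamma$, since every derivative carries the identical weight in $W^{2,p}_\gamma$, and so cannot produce a Weyl sequence. Dilation is essential because it exhibits the scaling mismatch between the two derivatives gained by $\Delta$ and the uniform-weight norm of $W^{2,p}_\gamma$ — the very mismatch that the Kondratiev norm $M^{2,p}_\gamma$ corrects by augmenting the weight with each derivative.
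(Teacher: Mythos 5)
Your proof is correct, but it takes a genuinely different route from the paper. The paper argues by conjugation: multiplication by $\langle\mathbf{x}\rangle^{\gamma}$ is an isometry between the weighted and unweighted spaces, the conjugated operator $\langle\mathbf{x}\rangle^{\gamma}\Delta\langle\mathbf{x}\rangle^{-\gamma}$ differs from $\Delta$ by lower-order terms with decaying coefficients (hence a relatively compact perturbation), and Fredholmness is invariant under compact perturbations --- so the claim reduces to the known fact that $\Delta:W^{2,p}\to L^p$ is not Fredholm. You instead build an explicit singular sequence by dilation directly in the weighted space: the exponent count $\|\langle\mathbf{x}\rangle^{\gamma}D^{\alpha}\phi_n\|_{L^p}\sim n^{2/p+\gamma-|\alpha|}$ is right, the disjoint-support argument along $n_k=3^k$ correctly rules out any convergent subsequence, and your reduction of Fredholmness to ``finite-dimensional kernel plus closed range forces compactness of such a sequence'' is the standard semi-Fredholm characterization, properly spelled out. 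What each approach buys: the paper's is a one-line reduction that simultaneously yields Proposition \ref{Sobolev}, but it leans on the unweighted non-Fredholmness as a black box; yours is self-contained (it proves that black-box fact as the case $\gamma=0$) and works uniformly for $p\in[1,\infty)$. Your closing remark is also the conceptually valuable part: translation fails precisely because every derivative in $W^{2,p}_{\gamma}$ carries the same weight, whereas dilation exposes the mismatch between the two orders gained by $\Delta$ and the uniform weight --- exactly the defect the Kondratiev norm is designed to remove.
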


Both results follow in a straightforward fashion by noticing that $L^2_\gamma$ and $L^2$ are conjugate by a multiplication operator, and the conjugate operator to $\Delta$ is a compact perturbation of the Laplacian, which establishes Fredholm properties.

\subsection{ Kondratiev spaces}

Kondratiev spaces were first introduced to study boundary value problems for elliptic equations in domains with conical points \cite{kondrat1967boundary}.  Nirenberg and Walker \cite{nirenberg1973null} later used these spaces to show that  elliptic operators with coefficients that decay sufficiently fast at infinity have finite dimensional kernel when considered as operators between these weighted spaces and McOwen \cite{McOwen} established  Fredholm properties for the Laplacian. Lockhart and McOwen \cite{lockhart1981fredholm, lockhart1983elliptic, lockhart1985elliptic} built on these ideas to establish Fredholm properties for classes of elliptic operators. For example, Lockart \cite{lockhart1985elliptic} studied elliptic operators of the form $A = A_{\infty} +A_0$ in non-compact manifolds, where $A_{\infty}$  represents a constant coefficient homogeneous elliptic operator of order $m$, and $A_0$ an operator of order at most $m$ with coefficients that decay fast at infinity. More recently, Kondratiev spaces 
were used to study the Laplace operator in exterior domains \cite{amrouche2008mixed} and similar weighted spaces were used in \cite{milisic2013weighted} to understand the Poisson equation in a 1 periodic infinite strip $Z = [0,1] \times \bbR$. 

We will denote Kondratiev spaces by $M^{s,p}_{\gamma} $ and define them as the completion of $C^{\infty}_0(\bbR^n)$ under the norm
\[ 
\| u\|_{M^{s,p}_{\gamma}} = \sum_{|\alpha|\leq s} \| \langle {\bf x} \rangle^{\gamma + |\alpha|} D^{\alpha} u \|_{L^p},
\]
where $\langle {\bf x} \rangle = ( 1 + |{\bf x}|^2)^{1/2}$, $\gamma \in \bbR$, $ s \in \bbN$, and $p \in (1, \infty)$. Notice the embeddings $M^{s,p}_{\gamma} \hookrightarrow W^{s,p}_{\gamma}$, as well as $M^{s,p}_{\gamma} \hookrightarrow M^{s-1,p}_{\gamma}$. 

The following theorem describes the behavior of the Laplacian in Kondratiev spaces. Its proof can be found in \cite{McOwen}.
 
\begin{thm}\label{McOwen}
Let $1<p=\frac{q}{q-1}<\infty$,  $n \geq 2$, and $\gamma \neq -2 + n/q + m $ or $\gamma \neq -n/p -m $, for some $m \in \bbN$.  Then 
\[ 
\Delta: M^{2,p}_{\gamma} \rightarrow L^p_{\gamma+2}, 
\]
is a Fredholm operator and
\begin{enumerate}
\item for $-n/p < \gamma < -2 + n/q$ the map is an isomorphism;
\item for $-2 + n/q + m < \gamma< -2 + n/q + m+1$ , $m \in \bbN$, the map is injective with closed range equal to 
\[
R_m = \left\{ f \in L^p_{\gamma +2} : \int f(y)H(y) =0 \  \text{for all } \ H \in \bigcup_{j=0}^m \mmH_j\right\}; 
 \]
\item for $-n/p - m -1 < \gamma <-n/p -m $, $m \in \bbN$, the map is surjective with kernel equal to 
\[ N_m = \bigcup_{j=0}^m \mmH_j
.\]
\end{enumerate}
Here,  $\mmH_j $ denote the harmonic homogeneous polynomials of degree $j$.

On the other hand, if $\gamma = -n/p - m $ or $\gamma = -2 + n/q + m$ for some $m \in \bbN$, then $\Delta$ does not have closed range.
 \end{thm}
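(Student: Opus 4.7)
My plan is to reduce $\Delta$ on $\bbR^n$ to a constant-coefficient operator on the infinite cylinder $\bbR \times S^{n-1}$. Introduce polar coordinates $x = r\omega$ with $\omega \in S^{n-1}$, and the logarithmic substitution $t = \log r$, so that $\bbR^n \setminus \{0\}$ becomes the cylinder. A direct computation gives $r^2 \Delta = \partial_t^2 + (n-2) \partial_t + \Delta_{S^{n-1}}$. A key observation, and the reason Kondratiev spaces are tailored to this problem, is that the weight $\langle {\bf x} \rangle^{\gamma + |\alpha|}$ on $D^\alpha u$ ensures that $M^{2,p}_\gamma$ and $L^p_{\gamma+2}$ both transform into exponentially weighted Sobolev and Lebesgue spaces on the cylinder with the \emph{same} exponential weight $e^{\tilde\sigma t}$, where $\tilde\sigma = \gamma + n/p$. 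Consequently, the Fredholm problem for $\Delta$ is equivalent to the Fredholm problem for the constant-coefficient operator $L := \partial_t^2 + (n-2)\partial_t + \Delta_{S^{n-1}}$ on exponentially weighted spaces over $\bbR \times S^{n-1}$.

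Next I would orthogonally decompose $L^2(S^{n-1}) = \bigoplus_j \mmH_j$ into spherical harmonics, with $-\Delta_{S^{n-1}}|_{\mmH_j} = j(j+n-2)$. This splits $L$ into a countable family of constant-coefficient ODEs $v_j'' + (n-2)v_j' - j(j+n-2) v_j = g_j$ on $\bbR$. After conjugating by $e^{\tilde\sigma t}$, a direct calculation shows that the shifted characteristic polynomial has roots $\mu = \tilde\sigma + j$ and $\mu = \tilde\sigma - (j + n - 2)$. Each shifted ODE is invertible on unweighted $L^p(\bbR)$ precisely when both roots have nonzero real part, which, using $n/p + n/q = n$, translates into $\gamma \neq -n/p - j$ and $\gamma \neq -2 + n/q + j$. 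Setting $m = j$ recovers exactly the exceptional set in the theorem.

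The kernel and cokernel structure then follow by tracking which characteristic roots cross zero as $\gamma$ varies. In the base strip $-n/p < \gamma < -2 + n/q$ no root is crossed and $L$ is an isomorphism mode-by-mode, hence globally. Decreasing $\gamma$ past $-n/p - m$ admits the homogeneous harmonic polynomials $r^j Y_j$ for $Y_j \in \mmH_j$, $j \le m$, into $M^{2,p}_\gamma$, enlarging the kernel to $N_m = \bigcup_{j=0}^m \mmH_j$. Symmetrically, increasing $\gamma$ past $-2 + n/q + m$ puts the same polynomials into the dual space $L^{p'}_{-\gamma-2}$, and integration by parts against the distributional identity $\Delta(r^j Y_j) = 0$ forces the solvability conditions $\int f H = 0$ for $H \in N_m$, yielding the range $R_m$. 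At a critical weight itself the relevant Fourier mode has a characteristic root on the imaginary axis, so the standard constant-coefficient argument shows that no uniform a priori bound is possible and the range fails to be closed.

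The main technical obstacle is passing from mode-by-mode $L^p$ estimates to a global $L^p$ bound on the cylinder when $p \neq 2$, since Parseval fails and summing individual inversions does not automatically yield an $L^p$-bounded operator. This is handled by representing $L^{-1}$ as a Mellin multiplier (equivalently, a Fourier multiplier in $t$) acting separately on each spherical mode, and invoking Mikhlin--H\"ormander-type multiplier estimates to obtain uniform bounds across the modes. The asymmetric appearance of $n/p$ on one side and $n/q$ on the other of the critical weight list is a direct reflection of the $L^p$--$L^{p'}$ duality intrinsic to this Mellin picture.
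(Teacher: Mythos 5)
Note first that the paper does not prove this theorem at all: it is quoted verbatim from the reference [McOwen], so there is no internal proof to compare against. Your outline is, in substance, the standard Kondratiev--Lockhart--McOwen argument that underlies that reference: pass to the cylinder via $t=\log r$, observe that the graded weights $\langle\mathbf{x}\rangle^{\gamma+|\alpha|}$ turn both sides into spaces with the same exponential weight $\rme^{(\gamma+n/p)t}$, decompose in spherical harmonics, and read off the exceptional weights from the characteristic roots $\tilde\sigma+j$ and $\tilde\sigma-(j+n-2)$. Your bookkeeping is correct (in particular it correctly reproduces the emptiness of the isomorphism strip when $n=2$ and the index $-3$ used later in the paper for $\gamma\in(0,1)$, $p=2$), and the identification of kernel and cokernel with $N_m$ and $R_m$ via which roots have been crossed is right.

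Two steps deserve more care than you give them. First, the equivalence with exponentially weighted spaces on the \emph{full} cylinder is false as stated: the Kondratiev weight is $\langle\mathbf{x}\rangle$, not $|\mathbf{x}|$, so near the origin ($t\to-\infty$) the norms do not match; the cylinder picture is valid only on $\{|\mathbf{x}|\geq 1\}$ and must be patched with standard interior elliptic estimates on a ball, e.g.\ through an a priori bound of the form $\|u\|_{M^{2,p}_\gamma}\lesssim\|\Delta u\|_{L^p_{\gamma+2}}+\|u\|_{L^p(B_2)}$. Second, and more seriously, for $p\neq 2$ ``uniform Mikhlin bounds across the modes'' do not by themselves yield boundedness of the summed inverse on $L^p(\bbR\times S^{n-1})$: the relevant multiplier is operator-valued, and uniform norm bounds on each scalar piece are not sufficient to sum them in $L^p$. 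One must either invoke an operator-valued (R-bounded) multiplier theorem, or use Agmon--Douglis--Nirenberg-type estimates for the translation-invariant elliptic operator on the cylinder, or (as McOwen does) estimate the explicit Green's kernel obtained from the spherical-harmonic expansion of the Newtonian potential. You correctly identify this as the main obstacle, but the tool you name does not close it in the form stated.
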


\section{Outline of proof}\label{summary}

Recall that we are interested in solutions of \begin{equation}\label{gl}
0 = \Delta A + A - A|A|^2 + \eps g(x,y),
\end{equation}
for localized $g$ and $\varepsilon$ small, close to the solutions at $\eps=0$,  $A_*(x)= \sqrt{1-k^2}\rme^{\rmi kx}$,  $|k|^2 < 1/3$. Since the case $k=0$ is in fact easier, we will assume in the following that $k>0$. A reasonable Ansatz then is
\[ 
A(x,y, \eps) =  (\sqrt{1-k^2} + s(x,y;\eps) ) \rme^{\rmi(kx + \phi( x,y;\eps))},
\]
with new variables $s,\phi$, which solve
\begin{align}\label{amp1}
\Delta s + (s+\tau) - ( s + \tau)( k^2 + 2k \partial_x \phi + | \nabla \phi|^2)  - (s + \tau)^3 + \eps \Real( g \rme^{-\rmi(kx + \phi)})&=0\\
\Delta \phi + \frac{2k \partial_x s}{s + \tau} + \frac{2 \nabla s \cdot \nabla \phi}{s +\tau} + \frac{ \eps \Imag(g \rme^{-\rmi(kx + \phi)})}{s + \tau}&=0,\label{phase1}
\end{align}
where we set $\tau=\sqrt{1-k^2}$. Linearizing at $\eps=0, s=0,\phi=0$, we obtain the operator
\begin{equation}\label{linearoperator}
 L \begin{bmatrix} s\\ \phi \end{bmatrix} = \begin{bmatrix} 
\Delta  - 2\tau^2 & -2k\tau   \partial_x  \\ 
 \dfrac{2k }{\tau}  \partial_x  & \Delta  
\end{bmatrix} \begin{bmatrix} s\\ \phi \end{bmatrix}.
\end{equation}

The results from the Section \ref{weightedspaces}, and in particular Theorem \ref{McOwen}, suggest that we should require that $\phi\in M^{2,p}_{\gamma}$ and that \eqref{phase1} holds in $L^p_{\gamma+2}$. 
Then $\phi_x\in W^{1,p}_{\gamma+1}$ and, using the linearization of \eqref{amp1} with Proposition \ref{Sobolev}, this suggests $s\in W^{3,p}_{\gamma+1}$ and $s_x\in W^{2,p}_{\gamma+1}$. This however is not sufficient localization for \eqref{phase1}, where $s_x$ enters, and which we assumed to be satisfied in $L^p_{\gamma+2}$. 

In other words, the coupling terms, which are absent for $k=0$, prohibit the simple use of Sobolev spaces for $s$ and Kondratiev spaces for $\phi$. Roughly speaking, the coupling destroys the linear scaling invariance in the $\phi$-equation, which is necessary at least at infinity in results on Fredholm properties in Kondratiev spaces, which intrinsically mix regularity and localization properties. We intend to address these problems more generally in future but focus here on a simple an direct construction that circumvents the problem by extending the system and introducing appropriate norms for derivatives. 

Consider therefore  $ T: \mmD \subset \mmX  \rightarrow \mmR \subset \mmY$,
\begin{equation}\label{extendedlinear}
  T \begin{bmatrix} s \\ \psi \\ \theta \\ u \\ v\\ w \end{bmatrix} = 
 \begin{bmatrix}
 \Delta -a & -1 & 0 & 0 & 0 & 0 \\
 0 & \Delta & 0 & b & 0 & 0\\
 0 & 0 & \Delta & 0 & b & 0\\
 0 & -\partial_{xx} & 0 & \Delta -a & 0 & 0 \\
 0 & 0 & -\partial_{xx} & 0 & \Delta -a & 0 \\
 0 & -\partial_{yy} & 0 & 0 & 0 & \Delta -a 
 \end{bmatrix}\begin{bmatrix} s \\ \psi \\ \theta \\ u \\ v\\ w \end{bmatrix},
\end{equation}
were \hskip0.3cm $\psi = 2k \tau\partial_x \phi$,\hskip0.3cm $\theta= 2k \tau \partial_y \phi$,\hskip0.3cm $a = 2\tau^2$,\hskip0.3cm $b = 4k^2$, 
\[
\mmX = W^{2,2}_{\gamma} \times M^{2,p}_{\gamma} \times M^{2,p}_{\gamma} \times L^p_{\gamma+2}\times L^p_{\gamma+2}\times L^p_{\gamma+2},\] 
\[ \mmY = L^p_{\gamma} \times L^p_{\gamma+2} \times L^p_{\gamma+2} \times W^{-2,p}_{\gamma+2}\times W^{-2,p}_{\gamma+2}\times W^{-2,p}_{\gamma+2}.
\]
Here, $W^{-k,p}_\gamma$ denotes the dual of $W^{k,p}_\gamma$. We also define the closed subspaces
 \[ \mmD = \left \{ (s, \psi, \phi, u,v,w) \in \mmX: u = \partial_{xx} s, \quad v= \partial_{xy} s,\quad w = \partial_{yy} s, \quad \partial_y \psi = \partial_x \theta \right \}, \]
\begin{align*}
 \mmR & = \left \{ y=(f_1,f_2,f_3,f_4,f_5,f_6) \in \mmY: \int f_2 = \int f_2 \cdot y= \int f_3=  \int f_3 \cdot x=0, \right. \\
& \quad \left.f_4 = \partial_{xx} f_1,  f_5= \partial_{xy} f_1,  f_6 = \partial_{yy}f_1, \quad \text{and} \quad \partial_y f_2 = \partial_x f_3   \right \}.
\end{align*}
The second and third equation in (\ref{extendedlinear}) are obtained by taking the $x$ and $y$ derivatives of the phase equation. The last three equations come from taking the second order partial derivatives of the amplitude equation with respect to $xx$, $xy$, and $yy$.
 
 We will see in Section \ref{sectionlinear} that the linear operator $T: \mmD \rightarrow \mmR$ is a Fredholm operator of index $-1$ for optimal choices of weights, indicating a missing parameter in the far field. We therefore add a single degree of freedom in the far field through the variable $\hat{c}\in\bbR$ via the Ansatz  
\[
\begin{array}{l c l c c c l c l}
s & =& \hat{s} + \hat{c} P_1,  & & &  &u &=& \hat{u} + \hat{c} \partial_{xx} P_1,\\
\psi & =& \hat{\psi} + \hat{c} \partial_x P_2,  & & &  &v &=& \hat{v} + \hat{c} \partial_{xy} P_1,\\
\theta & =& \hat{\theta} + \hat{c}\partial_y P_2,  & & &  &w &=& \hat{w} + \hat{c} \partial_{yy} P_1,
\end{array}
\]
where 
\begin{equation}\label{e:p1p2}
P_1= \dfrac{1- \alpha}{2 b \alpha}\partial_x[\chi \log(\alpha x^2+y^2)], \qquad P_2 = \dfrac{1}{2}\chi \log(\alpha x^2+y^2),
\end{equation}
$b = (2k)^2$, $\alpha = \dfrac{1-k^2}{1-3k^2}$, and $\chi$ is a smoothed version of the indicator function $\chi_{|\mathbf{x}|>1}$. Substituting this Ansatz into \eqref{amp1},\eqref{phase1} and linearizing, we find an operator $\hat{T}: \mmD \times \bbR \rightarrow \mmR$, given by 
\begin{equation}\label{newT} \hat{T} \xi =
  \begin{bmatrix} 
    \Delta -a & -1 &  & & &  &\Delta P_1  \\
 & \Delta & &b & & &\Delta   P_2 +b \partial_{xx}P_1  \\
 & &\Delta &  & b& &  \Delta  P_3 + b \partial_{xy} P_1 \\
 & - \partial_{xx} &  & \Delta -a & &  & \Delta  \partial_{xx} P_1\\
 &  &- \partial_{xx}  & & \Delta -a  &  & \Delta \partial_{xy} P_1    \\
& - \partial_{yy} &  &  &  &\Delta -a &  \Delta  \partial_{yy} P_1 \\
 \end{bmatrix} \begin{bmatrix} \hat{s} \\ \hat{\psi} \\ \hat{\theta} \\ \hat{u} \\ \hat{v} \\ \hat{w} \\ \hat{c}    \end{bmatrix},
 \end{equation}
where  again $a = 2\tau^2$, and $b = 4k^2$. We will show in the Section \ref{sectionlinear} that this operator is invertible. 

Recapitulating, we are lead to consider the Ansatz
\begin{equation}\label{Ansatz}
A(x,y; \eps, \varphi) = \left(\sqrt{1 -k^2} + s(x,y;\eps, \varphi)+ c(\eps, \varphi) P_1(x,y)\right) \rme^{ \rmi \left (kx +  \phi(x,y;\eps, \varphi)+  \frac{c(\eps, \varphi)}{2k \sqrt{1-k^2}} P_2(x,y) \right)}, 
\end{equation}
with $P_1$ and $P_2$ as in \eqref{e:p1p2}, with additional equations for the derivatives 
\[ 
u = \partial_{xx} s, \quad v = \partial_{xy} s, \quad w = \partial_{yy} s, \quad \psi = (2k)  \partial_{x} \phi,\quad \theta = (2k)  \partial_y \phi.
\]
We obtain a nonlinear equation  
\begin{equation}\label{nl0}
\hat{F}_{\eps, \varphi}=0, \qquad \hat{F}_{\eps, \varphi} : \mmD \times \bbR^3 \rightarrow \mmR;
\end{equation}
see Subsection \ref{sectionnonlinear} for a more detailed description of this nonlinear equation. The advantage of this subtle reformulation of the problem is encoded in the following result, which establishes applicability of the standard Implicit Function Theorem and is the key ingredient to the proof of Theorem \ref{MainTh1}.

\begin{thm}\label{MainTh}
Let $p=2$, $\gamma \in (0,1)$, and $g \in W^{2,p}_{\beta}$, with $\beta > \gamma+2$. Then, the operator $\hat{F}_{\eps,\varphi}: \mmD \times \bbR^3 \rightarrow \mmR$  is of class $C^\infty$. Furthermore, for fixed $\varphi$ and at $\eps =0$, its derivative is given by the invertible operator $\hat{T}: \mmD \times \bbR \rightarrow \mmR$.
\end{thm}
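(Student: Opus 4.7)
The proof splits into verifying (a) that $\hat{F}_{\eps,\varphi}$ is a smooth map between the indicated spaces, and (b) that its Fr\'echet derivative at the origin (with $\eps=0$) coincides with $\hat{T}$. The invertibility of $\hat{T}$ itself is the topic of Section \ref{sectionlinear}, so the main new input required here is (a), together with a direct computation for (b).

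For the smoothness, I would decompose $\hat{F}_{\eps,\varphi}$ into three kinds of contributions beyond the linear part: polynomial terms in $(\hat{s},\hat{\psi},\hat{\theta},\hat{u},\hat{v},\hat{w})$ multiplied by the fixed far-field profiles $P_1,\partial_xP_2,\partial_yP_2$; analytic compositions through $\rme^{-\rmi(kx+\hat{\phi}+\hat{c}P_2/(2k\tau))}$ entering from the forcing and from the nonlinear amplitude--phase coupling; and the rational factor $1/(\tau+\hat{s}+\hat{c}P_1)$ coming from dividing \eqref{phase1} by the amplitude. Each of these reduces to multiplication and composition estimates in the weighted spaces of Section \ref{weightedspaces}. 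The key ingredients are the Sobolev embedding $W^{2,2}(\bbR^2)\hookrightarrow L^\infty$ and its Kondratiev counterpart $M^{2,p}_\gamma\hookrightarrow L^\infty$, which keep $\hat{s}$ and $\hat{\phi}$ bounded (so that $\rme^{\rmi\hat{\phi}}$ and $(\tau+\hat{s}+\hat{c}P_1)^{-1}$ depend smoothly on their arguments via Taylor expansion), together with bilinear estimates of the type $\|fh\|_{L^p_{\gamma+2}}\lesssim\|f\|_{L^\infty}\|h\|_{L^p_{\gamma+2}}$ and $\|fh\|_{L^p_{\gamma+2}}\lesssim\|f\|_{M^{2,p}_\gamma}\|h\|_{L^p_{\gamma+2}}$. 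The hypothesis $g\in W^{2,p}_\beta$ with $\beta>\gamma+2$ gives enough decay for $\eps g\,\rme^{-\rmi(kx+\phi)}$ and two of its derivatives to land in $L^p_{\gamma+2}$, which is what the four lower components of $\mmR$ require.

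The main obstacle, which motivates the introduction of the augmented state space $\mmD$, is that products of the form $\nabla\hat{s}\cdot\nabla\hat{\phi}$ and $\partial_x\hat{s}$ present in the original amplitude--phase system must land in a space of weight $\gamma+2$; treating $\hat{s}$ only as an element of $W^{2,2}_\gamma$ leaves its second derivatives too weakly localized, which is precisely why they are promoted to independent variables $\hat{u},\hat{v},\hat{w}\in L^p_{\gamma+2}$ with the correct weight. Once this is in place, the verification that $\hat{F}_{\eps,\varphi}$ respects the constraints defining $\mmR$ is automatic: the differential identities $f_4=\partial_{xx}f_1$, $f_5=\partial_{xy}f_1$, $f_6=\partial_{yy}f_1$, $\partial_yf_2=\partial_xf_3$ hold because the corresponding rows of $\hat{F}_{\eps,\varphi}$ are literal partial derivatives of the first two rows, and the moment conditions $\int f_2=\int f_2\cdot y=\int f_3=\int f_3\cdot x=0$ follow from the same derivative structure by integration by parts, using the algebraic decay of all quantities involved.

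For part (b), one expands each component of $\hat{F}_{\eps,\varphi}$ to first order in $(\hat{s},\hat{\psi},\hat{\theta},\hat{u},\hat{v},\hat{w},\hat{c})$ at $\eps=0$. The first six columns of the resulting linear operator come from the standard linearization of the amplitude--phase system \eqref{amp1}--\eqref{phase1} at $(s,\phi)=(0,0)$, rewritten using $\hat{\psi}=2k\tau\partial_x\hat{\phi}$ and $\hat{\theta}=2k\tau\partial_y\hat{\phi}$ together with the second-derivative identifications; the seventh column collects the contributions of $\hat{c}P_1,\hat{c}\partial_xP_2,\hat{c}\partial_yP_2$, which simplify to the displayed entries of \eqref{newT} after using the algebraic identity $a(1-\alpha)/b=-\alpha$ valid in the far field, with any remaining terms compactly supported and thus harmless in the target spaces. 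This identifies the derivative with $\hat{T}$, and invoking its invertibility from Section \ref{sectionlinear} concludes the proof.
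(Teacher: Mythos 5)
Your overall strategy matches the paper's: smoothness via superposition operators plus weighted product estimates, well-definedness component by component, and identification of the derivative with $\hat{T}$ whose invertibility is imported from Section \ref{sectionlinear}. However, two of your key estimates are not strong enough to cover the hardest terms, and this is where the paper has to work.

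First, the bilinear estimates you propose, $\|fh\|_{L^p_{\gamma+2}}\lesssim\|f\|_{L^\infty}\|h\|_{L^p_{\gamma+2}}$ and $\|fh\|_{L^p_{\gamma+2}}\lesssim\|f\|_{M^{2,p}_\gamma}\|h\|_{L^p_{\gamma+2}}$, do not handle products such as $s_x\psi_x$ and $(s_x)^2$ appearing in $\hat{F}_2,\hat{F}_3$: there neither factor lies in $L^p_{\gamma+2}$ (first derivatives only carry weight $\gamma+1$), and neither is bounded with enough decay to donate the missing weight. You correctly identify the localization of derivatives of $s$ as the main obstacle, but promoting $\partial_{xx}s,\partial_{xy}s,\partial_{yy}s$ to variables $\hat u,\hat v,\hat w\in L^p_{\gamma+2}$ does not by itself fix the first-derivative products. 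The paper closes this in two steps: within $\mmD$ one has $\Delta s=u+w\in L^2_{\gamma+2}$, so the a priori estimate of Nirenberg--Walker (Lemma \ref{nirenberg}, Remark \ref{betters}) upgrades $s$ to $M^{2,2}_\gamma$, giving $s_x\in W^{1,2}_{\gamma+1}$; and then a product lemma (Lemma \ref{Holder3}) shows that the product of two functions each in $\langle\mathbf{x}\rangle^{-(\gamma+1)}W^{1,p}$ lies in $L^p_{\gamma+2}$, using $\gamma>0$ and the two-dimensional embedding $W^{1,p}\hookrightarrow L^{2p}$. Without some version of these two ingredients your argument does not close.

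Second, for smoothness in $\hat c$ you must differentiate $\rme^{-\rmi\hat c P_2/(2k\tau)}$ with $P_2\sim\log|\mathbf{x}|$ unbounded; each $\hat c$-derivative produces a logarithmic factor, so the superposition operator is not smooth into $L^\infty$ on its own. The paper's Lemma \ref{differentiable} absorbs these logarithms into the surplus decay $\langle\mathbf{x}\rangle^{\beta-\gamma-2}$ of $g$ via the factorization $g\rme^{-\rmi\Phi}=G_1G_2G_3$ --- this is precisely why the strict inequality $\beta>\gamma+2$ is needed, a point your proposal uses only for membership of $g\rme^{-\rmi\phi}$ in $L^p_{\gamma+2}$, not for differentiability in $\hat c$. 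Your remaining points --- verification of the constraints defining $\mmR$ by the derivative structure and integration by parts, and the identification of the seventh column using $a(1-\alpha)/b=-\alpha$ --- are consistent with the paper.
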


The proof of this theorem will occupy the next 2 Sections. In Section \ref{sectionlinear} we show the fact that $T$ is a Fredholm operator and that $\hat{T}$ is invertible, and in Section \ref{sectionnonlinear} we show that the operator $\hat{F}$ is of class $C^{\infty}$.

\section{The linear operator}\label{sectionlinear}
In this section we consider the linear operators $T: \mmD \rightarrow \mmR$ and $\hat{T}:\mmD \times \bbR \rightarrow \mmR$ defined in (\ref{extendedlinear}) and (\ref{newT}), respectively. We first prove that for $p=2$ and $\gamma \in (0,1)$ the operator $T: \mmX \rightarrow \mmY $ is a Fredholm operator of index $-6$.  Then, we show  that restricting the domain and range to $\mmD$ and $\mmR$ turns $T$ into a Fredholm operator of index $-1$. Finally, using a bordering lemma, we prove  that the operator $\hat{T}$ is invertible. Throughout, we assume $ \gamma \in (0, 1)$ and $0<|k|< \dfrac{1}{\sqrt{3}}$.

\begin{prop}\label{FredholmT1} \
The operator $T:\mmX \rightarrow \mmY$ defined in (\ref{extendedlinear}) is a Fredholm operator with index $i=-6$ and trivial kernel. The cokernel is spanned by 
\begin{equation}\label{sol}
\{(0,a,0,b,0,0)^Te_j^*, (0,0,a,0,b,0)^Te_j^*, \, j=1,2,3\},\qquad e_1^*=1,e^*_2=x,e^*_3=y.
\end{equation}

\end{prop}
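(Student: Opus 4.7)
The plan is to exploit the sparse block structure of $T$ to reduce the Fredholm question to a single scalar elliptic operator on Kondratiev spaces, which can then be analyzed via Theorem~\ref{McOwen}. Permuting columns to the order $(s,\psi,u,w,\theta,v)$ and rows to $(1,2,4,6,3,5)$ brings $T$ into block-diagonal form with a $4\times 4$ block $T_A$ on $(s,\psi,u,w)$ and a $2\times 2$ block $T_B$ on $(\theta,v)$, the latter being
\[
S = \begin{bmatrix} \Delta & b \\ -\partial_{xx} & \Delta - a \end{bmatrix}: M^{2,p}_\gamma \times L^p_{\gamma+2} \longrightarrow L^p_{\gamma+2} \times W^{-2,p}_{\gamma+2}.
\]
Inside $T_A$, equation~1, $(\Delta - a)s = \psi + f_1$, determines $s \in W^{2,2}_\gamma$ uniquely from $\psi$ and $f_1$ by Proposition~\ref{Sobolev}; equation~6, $(\Delta - a)w = \partial_{yy}\psi + f_6$, analogously determines $w \in L^p_{\gamma+2}$ via the dual extension $\Delta - a: L^p_{\gamma+2} \to W^{-2,p}_{\gamma+2}$. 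The residual subsystem on $(\psi,u)$ from equations~2 and~4 is again exactly $S$, so the Fredholm data of $T$ reduces to that of $S \oplus S$.

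To analyze $S$, a further elimination $u = (\Delta - a)^{-1}(\partial_{xx}\psi + f_4)$ reduces it to the scalar operator
\[
\mmL := \Delta + b(\Delta - a)^{-1}\partial_{xx}: M^{2,p}_\gamma \to L^p_{\gamma+2},
\]
which shares its Fredholm properties with the local fourth-order operator $\mmM := (\Delta - a)\mmL = \Delta^2 - a\Delta + b\partial_{xx}: M^{2,p}_\gamma \to W^{-2,p}_{\gamma+2}$. The Fourier symbol $m(\xi) = |\xi|^4 + a|\xi|^2 - b\xi_1^2$ is strictly positive off the origin since $a - b = 2(1-3k^2) > 0$, and near $\xi = 0$ matches $\alpha^{-1}\xi_1^2 + \xi_2^2$ to leading order, with $\alpha = (1-k^2)/(1-3k^2)$, i.e.\ the symbol of the anisotropic Laplacian $-(\alpha^{-1}\partial_{xx} + \partial_{yy})$. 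Under the rescaling $\tilde x = \sqrt{\alpha}\,x$ this low-frequency symbol becomes that of the isotropic Laplacian, so Theorem~\ref{McOwen} with $n = p = 2$, $\gamma \in (0,1)$ applies in case~(ii) with $m = 1$, delivering injectivity and a three-dimensional cokernel spanned by $\mmH_0 \cup \mmH_1 = \rmspan\{1, x, y\}$. Kernel and cokernel of $S$ are then read off directly: a $\psi \in \ker\mmL$ satisfies $m(\xi)\hat\psi = 0$ and so is polynomial, but $M^{2,p}_\gamma$ with $\gamma \in (0,1)$ in $\bbR^2$ admits no non-zero polynomial; a cokernel element $(v_1,v_2)$ of $S$ satisfies $\Delta v_1 - \partial_{xx} v_2 = 0$ and $bv_1 + (\Delta - a) v_2 = 0$, so $v_2$ is again polynomial and the weight $-\gamma - 2 < -2$ in $W^{2,p'}_{-\gamma-2}$ restricts the degree to $\leq 1$, giving $v_2 \in \rmspan\{1,x,y\}$ and $v_1 = (a/b) v_2$. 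Each copy of $S$ thus contributes a three-dimensional cokernel spanned by $(ae_j^*, be_j^*)$; lifting through the $(s,w)$-eliminations in $T_A$ and assembling with $T_B$ reproduces the six elements of \eqref{sol} and gives $\mathrm{ind}\,T = -6$.

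The main obstacle is the closed-range property of $\mmL$: the nonlocal term $b(\Delta - a)^{-1}\partial_{xx}$ is bounded but not compact as a map $M^{2,p}_\gamma \to L^p_{\gamma+2}$, since it is zero-th order and does not decay at infinity, so $\mmL$ is not a compact perturbation of $\Delta$ and Fredholm stability does not apply naively. I would overcome this by working with the local fourth-order operator $\mmM$, whose positive elliptic symbol places it within the scope of Fredholm theory in Kondratiev spaces; the rescaling $\tilde x = \sqrt{\alpha}\,x$ combined with a homotopy to a constant-coefficient biharmonic model and uniform a priori estimates in Kondratiev norms should transport the Fredholm index and cokernel basis from Theorem~\ref{McOwen} to $\mmM$, and thence back to $\mmL$ and $S$.
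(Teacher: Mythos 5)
Your reduction of $T$ to two copies of the $2\times 2$ system $S$ on $(\psi,u)$ and $(\theta,v)$ by eliminating $s$ and $w$ through the invertible operators $\Delta-a$, and your identification of kernel and cokernel via polynomial solutions of the adjoint system, follow essentially the same route as the paper and are correct. The genuine gap is the step you yourself flag as the main obstacle: the closed-range property of the reduced scalar operator. Passing to the local fourth-order operator $(\Delta-a)\mmL=\Delta^2-a\Delta+b\partial_{xx}$ does not place you ``within the scope of Fredholm theory in Kondratiev spaces''---this is exactly the class of operators (homogeneous principal part plus a \emph{non-decaying} lower-order part, hence no scaling invariance) for which, as the introduction of the paper points out, no off-the-shelf results are available. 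Your proposed homotopy to the biharmonic model cannot transport the index: along a family such as $\Delta^2-t(a\Delta-b\partial_{xx})$ acting between the \emph{fixed} spaces $M^{2,p}_{\gamma}\to W^{-2,p}_{\gamma+2}$, the endpoint $\Delta^2$ is not Fredholm at all, because the weight increment $\gamma\mapsto\gamma+2$ is calibrated to second-order scaling while the symbol of $\Delta^2$ vanishes to fourth order at the origin. The Fredholm data you are after is governed precisely by the low-frequency, second-order part of the symbol, which the homotopy switches off; there is no continuous path of Fredholm operators and hence no index to transport.

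The paper's resolution of exactly this point---and the real content of the proof---is the factorization $\mmL=\mmM\bigl(\Delta-\tfrac{b}{a}\partial_{xx}\bigr)$, where the anisotropic Laplacian is conjugate to $\Delta$ by the rescaling $X=\sqrt{\alpha}\,x$ you already identified (so Theorem \ref{McOwen} applies verbatim and supplies the index $-3$ and the cokernel $\rmspan\{1,x,y\}$), and $\mmM$ is a zeroth-order Fourier multiplier whose symbol is smooth and bounded above and below. The multiplier is then shown to be an isomorphism not merely of $L^2$ but of the weighted range $(I-Q)L^2_{\gamma+2}$ by estimating derivatives of the symbol and interpolating (equivalently, boundedness on $H^j$ intersected with the appropriate moment conditions on the Fourier side). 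To complete your argument you would need to supply this, or an equivalent quantitative substitute for the homotopy: uniform a priori estimates in Kondratiev norms for the mixed-order operator do not follow from ellipticity of the symbol alone.
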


\begin{proof} 
First, notice that due to the lower block-triangular structure of $T$, it is enough to consider the restriction $\tilde{T}$ to the variables $\psi, \theta, u$, $v$, which we write in the form 
\begin{equation}\label{Perturbed} 
\tilde{T}\xi=  L \xi+b  B\xi, 
\end{equation}
where $b = 4k^2$, $a = 2(1-k^2)$, 
\[ 
L=  \begin{bmatrix}  \Delta  & 0 &0&0 \\ 0& \Delta&0 &0\\  - \partial_{xx} &0& \Delta -a&0 \\  0 &-\partial_{xx} &0 &\Delta -a \end{bmatrix}, \qquad
B = \begin{bmatrix} 0 &0 & I& 0 \\ 0 & 0 &0 &I\\ 0&0&0&0\\0&0&0&0 \end{bmatrix}, \qquad
\text{and} \quad\xi = \begin{bmatrix} \psi \\ \theta \\ u \\ v \end{bmatrix}. 
\]
We need to show that  
\[
\tilde{T}:   \tilde{\mmX} = M^{2,2}_{\gamma} \times M^{2,2}_{\gamma} \times L^2_{\gamma+2} \times L^2_{\gamma+2}\longrightarrow\tilde{\mmY} = L^2_{\gamma+2} \times L^2_{\gamma+2} \times W^{-2,2}_{\gamma+2} \times W^{-2,2}_{\gamma+2},
\]
is a Fredholm operator of index $i =-6$. Since $\tilde{T}$ is block-diagonal with respect to $(\psi,u)$ and $(\theta,v)$, it is sufficient to show that the restriction to $(\psi,u)$ is Fredholm with index $-3$. For $b=0$, the claim now follows directly from Theorem \ref{McOwen} and Proposition \ref{Sobolev}, due to the lower triangular structure and the fact that, with our choice of $\gamma,p$, the Laplacian is Fredholm with index $-3$. We will address the more difficult situation $b\neq 0$, next.

In order to establish the desired Fredholm properties, we need to solve 
\begin{align}\label{e1}
 \Delta \psi + b u & = f_1\\ \label{e2}
  \partial_{xx} \psi + (\Delta -a) u  &= f_2,
\end{align}
for $f_1,f_2$ in a codimension-3 subspace of $L^2_{\gamma+2} \times W^{-2,2}_{\gamma+2}$, with bounds on $(\psi,u)\in M^{2,2}_{\gamma} \times L^2_{\gamma+2}$.

Denote by $I-Q$ a projection on the range of the Laplacian, so that $\int (I-Q) f=\int x  (I-Q) f=\int y  (I-Q) f =0$. We can then decompose 
\begin{align}\label{e3}
 \Delta \psi + b u & = (I-Q)f_1\\ \label{e4}
  \partial_{xx} \psi + (\Delta -a) u  &= (I-Q)f_2,
\end{align}
and $Q u=\frac{1}{b} Q f_1= - \frac{1}{a}Q f_2$, exhibiting the 3 solvability conditions \eqref{sol}. We next solve \eqref{e4} for $u$, substitute in \eqref{e3}, to obtain
\[
\mmL \psi=(I-Q)f_1-(\Delta-a)^{-1}(I-Q)f_2=:f,\qquad \mmL=[\Delta + b (\Delta -a)^{-1} \partial_{xx}],
\]
where $f=(I-Q)f$. It therefore remains to show that $\mmL:M^{2,2}_{\gamma}\to (I-Q)L^2_{\gamma+2}$ is invertible.
We therefore factor 
\[ \mmL \psi = \mmM \left( \Delta - \frac{b}{a} \partial_{xx}\right) \psi, \]
\begin{center}
\begin{tikzpicture} 
\matrix(m)[matrix of math nodes,
 row sep=3.5em, column sep=5.5em,
  text height=2.5ex, text depth=0.25ex]
   {M^{2,2}_{\gamma}&(I-Q) L^2_{\gamma+ 2}&(I-Q) L^2_{\gamma+2} \\};
    \path[->,font=\footnotesize,>=angle 90] 
    (m-1-1) edge node[auto] {$\Delta - \dfrac{b}{a} \partial_{xx}$} (m-1-2)
	 (m-1-2) edge node[auto] {$\mmM$} (m-1-3); 
\end{tikzpicture}
\end{center}
By Theorem \ref{McOwen} the operator $\left(\Delta - \dfrac{b}{a} \partial_{xx} \right) : M^{2,2}_{\gamma} \rightarrow R_m$ is invertible, since it is conjugate to the Laplacian by a simple $x$-rescaling operator. It is therefore sufficient to establish that $\mmM$ is an isomorphism of $(I-Q)L^2_{\gamma+2}$. 
Consider therefore the associated Fourier symbol 
\[
\hat{\mmM}(k,l) = \frac{k^2+l^2}{k^2+l^2 -\frac{b}{a} k^2} - \frac{bk^2}{(k^2+l^2+a) ( k^2+l^2 - \frac{b}{a}k^2)}.
\]
Exploiting that $k^2<\frac{1}{3}$ so that  $1-\dfrac{b}{a} >0$, it is straightforward to see that 
\[
\sup_{(k,l) \in \bbR^2} |\hat{\mmM}(k,l)|+|\hat{\mmM}(k,l)^{-1}|<\infty,
\]
so that $\mmM$ is an isomorphism of $L^2$. We next show that $\mmM$ is an isomorphism on $(I-Q)L^2_j$, $j=2,3$, which by interpolation theory gives the desired result. Equivalently, we need to show boundedness of the multiplication operator $\hat{\mmM}$ on the subspace of  $H^j$, $j=2,3$ consisting of functions functions $f$ with $f(0)=0$ and, in case $j=3$, $\nabla f(0)=0$. Since $\hat{\mmM}(k,l)=a+\rmO(k^2+l^2)$ near $k=l=0$, we readily find that  $\| D^{\alpha} \hat{\mmM } |_{L^{\infty} }+\| D^{\alpha}( \hat{\mmM }^{-1}) |_{L^{\infty} }< \infty$ for all indices $|\alpha|\leq 2$, which proves that $\hat\mmM$ is an isomorphism on $H^2$. For the case $j=3$, we use that $\| D^{\alpha} (\bk\hat{\mmM }) |_{L^{\infty} }+\| D^{\alpha}(\bk \hat{\mmM }^{-1}) |_{L^{\infty} }< \infty$ for all indices $|\alpha|=3$, which readily implies that $\hat{\mmM}$ is an isomorphism on $H^3\cap\{f(0)=0\}$. One can also readily check that the range of $\mmM$ and $\mmM^{-1}$ is indeed contained in $\Rg(I-Q)$, which concludes the proof.
\end{proof}

\begin{cor}\label{FredholmT2}
The operator  $T: \mmD \rightarrow \mmR$ defined by (\ref{extendedlinear}) is Fredholm with index $-1$ and cokernel $\partial_y f_2+\partial_x f_3$.
\end{cor}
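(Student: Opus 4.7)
My plan is to deduce the corollary from Proposition \ref{FredholmT1} in three steps: (a) check that $T(\mmD)\subseteq\mmR$; (b) observe that the kernel remains trivial; (c) restrict the six cokernel functionals of $T:\mmX\to\mmY$ identified in \eqref{sol} to $\mmR$ and show that only one of them survives as an independent functional, once the constraint $\partial_y f_2=\partial_x f_3$ is imposed.

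For (a), I would verify each defining condition of $\mmR$ by direct computation using the constraints defining $\mmD$. The differential identities among the $f_i$ come from algebraic rearrangement: for instance, $f_4-\partial_{xx}f_1=(\Delta-a)(u-\partial_{xx}s)=0$, and analogously $f_6=\partial_{yy}f_1$; the mixed case $f_5=\partial_{xy}f_1$ uses both $v=\partial_{xy}s$ and $\partial_x\theta=\partial_y\psi$. The compatibility $\partial_y f_2=\partial_x f_3$ follows by applying $\partial_y$ to row 2 and $\partial_x$ to row 3 of \eqref{extendedlinear} and using $\partial_y u=\partial_{xxy}s=\partial_x v$. The four integral conditions $\int f_2=\int y f_2=\int f_3=\int x f_3=0$ follow from integration by parts against the polynomials $1,x,y$; the choice $\gamma\in(0,1)$, $p=2$ guarantees both absolute integrability (via weighted Cauchy--Schwarz, since $\langle\mathbf{x}\rangle^{-(\gamma+2)}$ and $|y|\langle\mathbf{x}\rangle^{-(\gamma+2)}$ lie in $L^2(\bbR^2)$ when $\gamma>0$) and vanishing of the boundary terms at infinity.

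Step (b) is immediate, since $\mmD\subset\mmX$ and Proposition \ref{FredholmT1} already gives $\ker T=0$. For (c), I first argue that $T(\mmD)=T(\mmX)\cap\mmR$. Given $\xi\in\mmX$ with $T\xi=y\in\mmR$, the identity $f_4=\partial_{xx}f_1$ forces $(\Delta-a)(u-\partial_{xx}s)=0$, which yields $u=\partial_{xx}s$ by invertibility of $\Delta-a$ on the weighted Sobolev space (Proposition \ref{Sobolev}); the analogous computations fix $v=\partial_{xy}s$ and $w=\partial_{yy}s$. With these in hand, the constraint $\partial_y f_2=\partial_x f_3$ collapses, after cancelling the $b$-terms, to $\Delta(\partial_y\psi-\partial_x\theta)=0$; a Liouville-type argument then excludes all nontrivial harmonic functions at the decay rate forced by $\psi,\theta\in M^{2,p}_\gamma$ with $\gamma+1>0$, so that $\partial_y\psi=\partial_x\theta$ and thus $\xi\in\mmD$.

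It remains to determine the codimension of $T(\mmX)\cap\mmR$ in $\mmR$ by restricting the six functionals \eqref{sol} to $\mmR$. For $y\in\mmR$ one has $f_4=\partial_{xx}f_1$ and $f_5=\partial_{xy}f_1$, so $\int f_4 e_j^*=\int f_5 e_j^*=0$ for every $e_j^*\in\{1,x,y\}$ (the test functions are annihilated by the relevant second derivatives under integration by parts). The six residual pairings $a\int f_2 e_j^*$ and $a\int f_3 e_j^*$ then reduce as follows: four of them, $\int f_2$, $\int y f_2$, $\int f_3$, $\int x f_3$, vanish by definition of $\mmR$; the remaining two, $\int xf_2$ and $\int yf_3$, are not individually forced to be zero. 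However, pairing the identity $\partial_y f_2=\partial_x f_3$ with the test function $xy$ and integrating by parts yields $\int x f_2=\int y f_3$, so these two survivors are linearly dependent on $\mmR$. Thus exactly one independent cokernel functional remains, giving Fredholm index $-1$, and its class can be represented by the single condition $\int x f_2=\int y f_3$, which is the sense in which the surviving obstruction is captured by the combined quantity $\partial_y f_2+\partial_x f_3$.

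The step I expect to require the most care is the Liouville argument inside (c): verifying that the decay encoded in the Kondratiev norm $M^{1,p}_{\gamma+1}$ with $\gamma+1>0$ really rules out all nonzero harmonic polynomials. Once this is in place, the rest of the proof is routine weighted integration by parts combined with the index data from Proposition \ref{FredholmT1}.
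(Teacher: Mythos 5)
Your argument is correct and follows essentially the same route as the paper: show $T(\mmD)\subseteq\mmR$, identify $T(\mmD)$ with $T(\mmX)\cap\mmR$ so that the cokernel embeds into that of $T:\mmX\to\mmY$, and then count that exactly one of the six functionals in \eqref{sol} survives on $\mmR$ (your Liouville step for $\partial_y\psi=\partial_x\theta$ simply makes explicit what the paper compresses into ``by injectivity''). Only your final clause is slightly off: the surviving solvability condition is $\int x f_2=0$ (equivalently $\int y f_3=0$), since the identity $\int xf_2=\int yf_3$ holds automatically on all of $\mmR$ and therefore imposes no constraint.
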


\begin{proof} 
Inspection of $T$ shows that the range of the restriction of $T$ to $\mmD$ is actually contained in $\mmR$, which implies that $T:\mmD\to\mmR$ is injective and the range is closed ($T$ is semi-Fredholm). We need to show that the cokernel is one-dimensional. 

Take $f\in \Rg(T)\subset \mmX$, with $f_4 = \partial_{xx} f_1,\,  f_5= \partial_{xy} f_1, \, f_6 = \partial_{yy}f_1$, and $\partial_y f_2 = \partial_x f_3$. 
By construction of $T$, notably having taken derivatives of equations for $s$ and $\phi$, and by injectivity, $T^{-1}f$ satisfies
$u = \partial_{xx} s, \,v= \partial_{xy} s$, $w = \partial_{yy} s$, and $\partial_y \psi= \partial_x \theta$. As a consequence, the cokernel of $T:\mmD\to\mmR$ is a subset of the cokernel of $T:\mmX\to\mmY$. Inspecting the cokernel in \eqref{sol} and the definition of $\mmR$, we see that $(e_j^*,f_4)=(e_j^*,f_5)=0$, so that the integral conditions in the definition of $\mmR$ represent precisely 4 of the 6 conditions on the co-kernel. One of the remaining conditions, $\int f_2\cdot x=\int f_3\cdot y$ is a consequence of $\partial_y f_2=\partial_x f_3$, whereas $\int f_2\cdot x$ can be readily seen to act non-trivially. As a consequence $T$ is Fredholm of index -1 as claimed, and the cokernel is spanned by $(0,x,0,0,0,0)^T$ or, equivalently,  $(0,x,y,0,0,0)^T$.
\end{proof}

We  next consider the operator $\hat{T}: \mmD \times \bbR \rightarrow \mmR$ defined by (\ref{newT}). Recall that
\begin{equation}\label{changevariables}
\begin{array}{l c l c c c l c l}
s & =& \hat{s} + \hat{c} P_1,  & & &  &u &=& \hat{u} + \hat{c} \partial_{xx} P_1,\\
\psi & =& \hat{\psi} + \hat{c} P_2,  & & &  &v &=& \hat{v} + \hat{c} \partial_{xy} P_1,\\
\theta & =& \hat{\theta} + \hat{c} P_3,  & & &  &w &=& \hat{w} + \hat{c} \partial_{yy} P_1.
\end{array}
\end{equation}

Here, 
\[P_1(x,y) =  \dfrac{ (1-\alpha)}{2b\alpha} \partial_x [\chi \ln( \alpha x^2+y^2)],\quad
  P_2(x,y)  =  \dfrac{1}{2} \partial_x [ \chi \ln( \alpha x^2+y^2)],\quad
   P_3(x,y) =  \dfrac{1}{2} \partial_y [ \chi \ln( \alpha x^2+y^2) ],
\]
%

 with $\alpha  = \dfrac{1-k^2}{1-3k^2}$, \hskip0.4cm $b= (2k)^2$, \hskip0.4cm and  \hskip0.4cm $\chi \in C^{\infty}(\bbR^2)$ defined by
 \[\chi(x,y) = \left\{ \begin{array}{c c l} 0 & \text{if} & 0 \leq \sqrt{\alpha x^2+y^2} \leq 1/2\\
 						1 & \text{if} & 1 \leq \sqrt{\alpha x^2+y^2}
						\end{array} \right. .\]

To show $\hat{T}: \mmD \times \bbR \rightarrow \mmR$ is invertible we will need the following lemma.

\begin{lem}\label{extraM}

The operator 
\[
M: \bbR \rightarrow \mmR,\qquad  c \mapsto \begin{bmatrix}  \Delta  P_1&
							\Delta    P_2 +b \partial_{xx} P_1  &
							 \Delta  P_3 + b \partial_{xy} P_1 &
							 \Delta \partial_{xx} P_1&
							 \Delta \partial_{xy} P_1&
							 \Delta \partial_{yy} P_1
							 							 \end{bmatrix}^T c,
\]
is well-defined and its range satisfies 
\[ \iint \Big [\Delta P_2 +b \partial_{xx} P_1 \Big ] \cdot x \rmd x\rmd y= \iint \Big [ \Delta P_3 +b \partial_{xy}P_1 \Big ] \cdot y\rmd x\rmd y \neq 0 . \]
\end{lem}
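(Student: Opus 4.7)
The plan is to exploit the algebraic identity
\[
\Delta + \tfrac{1-\alpha}{\alpha}\partial_{xx} \;=\; \tfrac{1}{\alpha}\partial_{xx} + \partial_{yy} \;=:\; L^{*},
\]
which, on substituting the explicit formulas for $P_1,P_2,P_3$ with $Q := \chi\log(\alpha x^2+y^2)$, yields
\[
\Delta P_2 + b\partial_{xx}P_1 \;=\; \tfrac{1}{2}\partial_x[L^{*}Q], \qquad \Delta P_3 + b\partial_{xy}P_1 \;=\; \tfrac{1}{2}\partial_y[L^{*}Q].
\]
Under the anisotropic rescaling $x' = \sqrt{\alpha}\,x$, the operator $L^{*}$ becomes the standard planar Laplacian and $\log(\alpha x^2+y^2) = \log(x'^2+y^2)$ is its usual logarithmic fundamental solution, so $L^{*}\log(\alpha x^2+y^2) = 0$ away from the origin. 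Since $\chi \equiv 0$ near the origin, $L^{*}Q$ is smooth and compactly supported in the annulus where $\chi$ transitions from $0$ to $1$.

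To show that $M$ lands in $\mmR$, I check the component spaces and the structural constraints separately. Components two and three are $x$- and $y$-derivatives of the compactly supported function $L^{*}Q$, hence lie in every $L^p_{\gamma+2}$. Component one is $\Delta P_1 = \tfrac{1-\alpha}{2b\alpha}\partial_x\Delta Q$; a direct computation gives $\Delta\log(\alpha x^2+y^2) = 2(1-\alpha)(\alpha x^2-y^2)/(\alpha x^2+y^2)^2 \sim |\mathbf{x}|^{-2}$, and one more $x$-derivative produces $|\mathbf{x}|^{-3}$ decay, which sits in $L^p_\gamma$ for $p=2$, $\gamma\in(0,1)$. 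Components four through six are distributional second derivatives of $\Delta P_1$ and hence lie in $W^{-2,p}_{\gamma+2}$. The relations $f_j = \partial^\alpha f_1$ for $j=4,5,6$ follow because $\Delta$ commutes with partial derivatives; the compatibility $\partial_y f_2 = \partial_x f_3$ reduces to equality of mixed partials of $L^{*}Q$; and the four moment conditions $\int f_2 = \int y f_2 = \int f_3 = \int x f_3 = 0$ all follow from a single integration by parts against the compactly supported $L^{*}Q$, using $\partial_x(1)=\partial_y(1)=\partial_x(y)=\partial_y(x)=0$.

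The equality of the two integrals in the statement drops out of that same integration by parts: since $\partial_x(x) = \partial_y(y) = 1$,
\[
\iint x\bigl[\Delta P_2 + b\partial_{xx}P_1\bigr]\,\rmd x\rmd y = -\tfrac{1}{2}\iint L^{*}Q\,\rmd x\rmd y = \iint y\bigl[\Delta P_3 + b\partial_{xy}P_1\bigr]\,\rmd x\rmd y.
\]
To see that the common value is nonzero, pass to variables $(x',y) = (\sqrt{\alpha}\,x, y)$, in which $L^{*}$ becomes the standard Laplacian and $Q$ becomes $\tilde\chi(x',y)\log(x'^2+y^2)$ for a radial cutoff $\tilde\chi$ that equals $1$ on $\{x'^2+y^2\geq 1\}$. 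Integrating over a large disk $B_R$ on which $\tilde\chi\equiv 1$ and applying the divergence theorem gives
\[
\iint_{B_R} \Delta_{(x',y)}[\tilde\chi\log r^2]\,\rmd x'\rmd y = \oint_{\partial B_R}\partial_r\log r^2\,\rmd S = 4\pi.
\]
Accounting for the Jacobian $1/\sqrt{\alpha}$ from the change of variables, the common integral equals $-2\pi/\sqrt{\alpha} \neq 0$.

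The main obstacle is merely careful bookkeeping: tracking weights when verifying well-definedness component by component, and correctly tracking signs and Jacobians in the integration by parts. Conceptually nothing else is at stake, since the argument reduces to the observation that $\log(\alpha x^2+y^2)$ is, up to an affine rescaling, the fundamental solution of the operator $L^{*}$ that governs the coupling between $P_1$ and $(P_2,P_3)$, together with the classical identity $\iint \Delta[\chi \log r^2]=4\pi$.
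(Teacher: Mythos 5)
Your argument is correct and is essentially the paper's own: the same anisotropic rescaling $X=\sqrt{\alpha}\,x$ reduces $\Delta P_2+b\partial_{xx}P_1$ and $\Delta P_3+b\partial_{xy}P_1$ to $\tfrac12\partial_x$ and $\tfrac12\partial_y$ of the compactly supported function $L^{*}Q$, and the same integration by parts against $x$ (resp.\ $y$) reduces both integrals to the classical identity $\iint\Delta[\chi\log r^{2}]=4\pi$; you are merely more explicit than the paper about verifying the moment and compatibility conditions defining $\mmR$, which the paper leaves to the reader. One remark: your value $-2\pi/\sqrt{\alpha}$ for the common integral differs from the paper's $\sqrt{\alpha}\,\pi$ (the paper's final displays appear to drop a sign and the Jacobian factors in the change of variables), but since only nonvanishing is asserted here the lemma is unaffected either way — the discrepancy would matter only for the explicit constant in $c_1(\varphi)$ downstream.
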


\begin{proof} First notice that the smooth functions $P_i$, for $i = 1,2,3$, are bounded in compact sets and behave like $\dfrac{1}{|{\bf x}|}$ for large values of $|{\bf x} |$ so that the range of $M$ is indeed a subset of $\mmY$. From the definition, it is not difficult to check that the operator $M$ maps into the desired space $\mmR$.  We need to show that
\[ \iint \Big [\Delta P_2 +b \partial_{xx} P_1 \Big ] \cdot x\rmd x\rmd y = \iint \Big [ \Delta P_3 +b \partial_{xy}P_1 \Big ] \cdot y\rmd x\rmd y \neq 0 . \]

Straightforward calculations, using the rescaling $X = \sqrt{\alpha} x , Y = y$, show that 
\[ \Delta  P_2+ b \partial_{xx}  P_1 = \frac{\sqrt{\alpha}}{2} \Delta_{X,Y} \left [ \frac{\partial}{\partial X} \left ( \chi \cdot \ln ( X^2+Y^2)\right)  \right ],\]
where we write $\Delta_{X,Y} = \partial_{XX} + \partial_{YY}$. Therefore,
\begin{align*}
 \iint [\Delta  P_2+ b \partial_{xx}  P_1 ] \cdot x  \rmd x \rmd y &= \iint \left[  \frac{\sqrt{\alpha}}{2} \Delta_{X,Y} \left [ \frac{\partial}{\partial X} \left ( \chi \cdot \ln ( X^2+Y^2)\right)  \right ] \right] \cdot X  dX dY  \\
 &= \iint \left[  \frac{\sqrt{\alpha}}{2} \Delta_{X,Y}  \left ( \chi \cdot \ln ( X^2+Y^2)\right)  \right]   dX dY  \\
 &= \sqrt{\alpha} \pi .
 \end{align*}

Similarly, using the same rescaling, it can be shown that
\[
 \Delta  P_3+ b \partial_{xy} P_1 = \frac{1}{2} \Delta_{X,Y} \left[ \frac{\partial}{\partial Y} \left( \chi \cdot \ln (X^2+Y^2)  \right)  \right],\]
and consequently
\begin{align*}
\iint \left[  \Delta P_3+ b \partial_{xy}  P_1 \right] \cdot y \rmd x\rmd y &= \iint \left[ \frac{1}{2} \Delta_{X,Y} \left[ \frac{\partial}{\partial Y} \left( \chi \cdot \ln (X^2+Y^2)  \right)  \right] \right] \cdot Y \sqrt{\alpha} dXdY \\
&= \sqrt{\alpha} \pi.
\end{align*}
\end{proof}

\begin{cor}\label{FredholmT3}
The operator $\hat{T}:\mmD \times \bbR \rightarrow \mmR$ is invertible.
\end{cor}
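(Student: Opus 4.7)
The plan is to use a standard bordering argument: since Corollary~\ref{FredholmT2} establishes that $T:\mmD\to\mmR$ is Fredholm of index $-1$, injective, with one-dimensional cokernel, extending the domain by a single scalar $\hat c$ multiplying a vector that pairs non-trivially with the cokernel functional produces an invertible operator. Everything amounts to identifying the cokernel functional explicitly and reading off Lemma~\ref{extraM}.

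First I would write
\[
\hat T\begin{bmatrix}\hat\xi\\ \hat c\end{bmatrix}=T\hat\xi+\hat c\,M(1),
\]
where $M(1)\in\mmR$ is the six-component vector of Lemma~\ref{extraM}. Note that $M(1)\in\mmR$ needs to be verified (the lemma states the integral/symmetry conditions hold on the range of $M$, and the first and last three slots have $P_1$-structure chosen precisely so that $f_4=\partial_{xx}f_1$, $f_5=\partial_{xy}f_1$, $f_6=\partial_{yy}f_1$ and the curl condition $\partial_y f_2=\partial_x f_3$ hold; the mean-zero and moment conditions follow by a direct computation using that $P_2,P_3$ are derivatives of the logarithm). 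Next, from Corollary~\ref{FredholmT2} the cokernel of $T:\mmD\to\mmR$ is spanned by the functional
\[
\ell(f)=\iint f_2(x,y)\,x\,\rmd x\rmd y,
\]
which is well defined on $\mmR$ because of the supplementary decay/integrability encoded in the weight $\gamma+2$ and the moment conditions built into $\mmR$. By Lemma~\ref{extraM},
\[
\ell(M(1))=\iint\bigl[\Delta P_2+b\,\partial_{xx}P_1\bigr]\cdot x\,\rmd x\rmd y=\sqrt\alpha\,\pi\neq 0.
\]

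For injectivity: suppose $\hat T(\hat\xi,\hat c)=0$. Apply $\ell$ to both sides. Since $T\hat\xi\in\Rg(T)=\ker\ell$, we obtain $\hat c\,\ell(M(1))=0$, so $\hat c=0$. Then $T\hat\xi=0$, and injectivity of $T$ on $\mmD$ forces $\hat\xi=0$. For surjectivity: given any $f\in\mmR$, set
\[
\hat c=\frac{\ell(f)}{\ell(M(1))},\qquad\tilde f=f-\hat c\,M(1).
\]
Then $\ell(\tilde f)=0$, so $\tilde f\in\Rg(T)$, and since $T:\mmD\to\Rg(T)$ is bijective (index $-1$ with trivial kernel and cokernel exactly $\rmspan\{\ell\}$), there is a unique $\hat\xi\in\mmD$ with $T\hat\xi=\tilde f$. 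Thus $\hat T(\hat\xi,\hat c)=f$. Boundedness of $\hat T$ is clear from the continuity of $T$ and of the inclusion $\bbR\hookrightarrow\mmR,\ c\mapsto cM(1)$, and then invertibility of the bounded bijection follows from the open mapping theorem.

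The only non-routine step is confirming that the cokernel pairing in Corollary~\ref{FredholmT2} indeed descends to the functional $\ell(f)=\iint f_2\cdot x$ on $\mmR$, so that Lemma~\ref{extraM} provides the needed non-degeneracy $\ell(M(1))\neq 0$; once this identification is made, the bordering lemma is mechanical. I would therefore present the argument concisely, writing $\hat T$ in the block form $[\,T\mid M(1)\,]$ and invoking the elementary fact that such a block operator is invertible iff $\ell(M(1))\neq 0$ for a generator $\ell$ of the cokernel of the leading block.
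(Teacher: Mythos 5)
Your argument is correct and follows essentially the same route as the paper: write $\hat T=[\,T\mid M\,]$, use the index-$(-1)$, injective Fredholm operator $T$ from Corollary~\ref{FredholmT2}, and conclude invertibility from the fact that $M$'s image pairs non-trivially with the one-dimensional cokernel (Lemma~\ref{extraM}). The only difference is that you unpack the bordering lemma into explicit injectivity/surjectivity steps where the paper simply invokes it, which is fine.
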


\begin{proof} Notice that $\hat{T}= [ T  M]$, where $T: \mmD \rightarrow \mmR$ is the Fredholm operator of index $-1$ described in Corollary \ref{FredholmT2}, and  $M: \bbR \rightarrow \mmR$ is defined in Lemma \ref{extraM}. A bordering lemma implies that $\hat{T}: \mmD \times \bbR \rightarrow \mmR$ is a Fredholm operator of index $0$. Lemma \ref{extraM} implies that $\Rg(M)\not\subset\Rg(T)$, so that $\hat{T}$ is onto, hence invertible.
\end{proof}

\section{The nonlinear map}\label{sectionnonlinear}

In this section, we show by a series of lemmas that the nonlinear problem \eqref{nl0} is well defined and continuously differentiable. We first give explicit expressions for each component of  the nonlinearities. We then state and prove several lemmas that will help us show that the nonlinearity is well defined. Finally, we show that the nonlinearities are of class $C^{\infty}$.

 The following expressions represent each component of the operator $\hat{F}_{\eps, \varphi}$ announced in \eqref{nl0}:
\begin{align*}
 \hat{F}_1 (\xi;\eps, \varphi) =& \Delta s - 2\tau^2 s - (s + \tau) \left( \frac{\psi}{\tau} + \frac{1}{(2k\tau)^2} (\psi^2 + \theta^2)\right) - (s^3 + 3s^2 \tau)+ \eps \Real[ g \rme^{-\rmi (kx +\phi(\varphi))}],\\
\hat{F}_2 (\xi;\eps, \varphi) =& \Delta \psi + \frac{(2k)^2\tau u + 2( u \psi + v \theta + s_x \psi_x + s_y \psi_y) }{s+ \tau} -\frac{(2 k)^2 \tau(s_x)^2 + 2s_x( s_x \psi +s_y \theta)}{ (s+\tau)^2} \\
 &+ \frac{\eps \Imag[ \partial_x( g \rme^{-\rmi (kx +\phi(\varphi))})]}{s+ \tau} -\frac{\eps s_x \Imag[g \rme^{-\rmi(kx +\phi(\varphi))}] }{(s+ \tau)^2},
\\
\hat{F}_3 (\xi;\eps, \varphi) =& \Delta \theta + \frac{(2k)^2 \tau v + 2( v \psi + w \theta + s_x \theta_x +s_y \theta_y) }{s + \tau} - \frac{(sk)^2 \tau s_xs_y + 2s_y(s_x \psi + s_y \theta)}{(s+\tau)^2} \\
 &+ \frac{ \eps \Imag[ \partial_y( g \rme^{-\rmi(kx +\phi(\varphi))})]}{s+\tau} - \frac{ \eps s_y \Imag[ g \rme^{-\rmi(kx + \phi(\varphi))}]}{(s+ \tau)^2},
\\
\hat{F}_4 (\xi;\eps, \varphi) =& \Delta u - 2 \tau^2 u -u\left( \frac{\psi}{\tau} + \frac{1}{(2k\tau)^2} (\psi^2 + \theta^2)\right)  -2 s_x \left( \frac{\psi_x}{\tau} + \frac{2}{(2k\tau)^2} (\psi_x \psi + \psi_y \theta) \right) \\
& -(s + \tau) \left( \frac{\psi_{xx}}{\tau} + \frac{2}{(2k\tau)^2} \left( \psi_{xx} \psi + \psi_{xy} \theta + | \nabla \psi|^2 \right) \right)\\
& - ( 6s(s_x)^2 + 3s^2 u + 6 \tau (s_x)^2 + 6\tau s u) + \eps \Real[ \partial_{xx}( g \rme^{-\rmi(kx +\phi(\varphi))})],
\\
\hat{F}_5 (\xi;\eps, \varphi) =& \Delta v - 2 \tau^2 v -v\left( \frac{\psi}{\tau} + \frac{1}{(2k\tau)^2} (\psi^2 + \theta^2)\right)  - s_x \left( \frac{\theta_x}{\tau} + \frac{2}{(2k\tau)^2} (\theta_x \psi + \theta_y \theta) \right) \\
& - s_y \left( \frac{\psi_x}{\tau} + \frac{2}{(2k\tau)^2} (\psi_x \psi + \psi_y \theta) \right) -(s + \tau) \left( \frac{\theta_{xx}}{\tau} + \frac{2}{(2k\tau)^2} \left( \theta_{xx} \psi + \psi_{yy} \theta + \theta_x\psi_x+\theta_y\psi_y \right)  \right)\\
& - ( 6ss_xs_y + 3s^2 v + 6 \tau s_x s_y + 6\tau s v) + \eps\Real[ \partial_{xy}( g \rme^{-\rmi(kx +\phi(\varphi))})],
\\
\hat{F}_6 (\xi;\eps, \varphi) =& \Delta w - 2 \tau^2 w -w\left( \frac{\psi}{\tau} + \frac{1}{(2k\tau)^2} (\psi^2 + \theta^2)\right)  -2 s_y \left( \frac{\theta_x}{\tau} + \frac{2}{(2k\tau)^2} (\theta_x \psi + \theta_y \theta) \right) \\
& -(s + \tau) \left( \frac{\psi_{yy}}{\tau} + \frac{2}{(2k\tau)^2} \left( \theta_{xy} \psi + \theta_{xy} \theta + | \nabla \theta|^2 \right) \right)\\
& - ( 6s(s_y)^2 + 3s^2 w + 6 \tau (s_y)^2 + 6\tau s w) +\eps \Real[ \partial_{yy}( g \rme^{-\rmi(kx +\phi(\varphi))})].
\end{align*}

Here, $\varphi \in \bbR$, $\tau = \sqrt{1 - k^2}$ , $k^2< \dfrac{1}{3}$, and the variable $\xi = ( s, \psi, \theta, u,v,w)$ is given by the formulas in (\ref{changevariables}), so that we can actually consider $\hat{F}$ as an operator on $\mmD\times \bbR$ for fixed $\eps,\varphi$.

 Since $(2k\tau) \nabla \phi = \langle \psi, \theta \rangle$ we define $\phi$ by
\begin{equation}\label{defphi}  \phi(x,y; \eps, \varphi) = \phi_\mathrm{bd}+ \phi_{\log}, \end{equation}
where
\begin{align*}
 \phi_\mathrm{bd}(x,y;\eps,\varphi) = &\varphi + \frac{1}{2k\tau}\int_{t=0}^1 \left(\hat{\psi}(tx,ty;\eps)x+  \hat{\theta} (tx,ty;\eps)y\right) \\
 \phi_{\log}(x,y;\eps,\varphi) =& \frac{1}{2k\tau} \int_{t=0}^1\left( P_2(tx,ty)x+  P_3 (tx,ty)y\right) =  \frac{1}{2k\tau} \chi \log(\alpha x^2+y^2).
 \end{align*}
 The following  lemma shows that $\phi_\mathrm{bd}$ is a well-defined function.
\begin{lem}\label{phasebounded}
If $\hat{\psi}, \hat{\theta} \in M^{2,2}_{\gamma}$ then for fixed $\eps$ and $\varphi$, the function $\phi_\mathrm{bd}(x,y; \eps, \varphi)$  is well defined, bounded, continuous, and approaches a constant  $\varphi + \Phi_{\infty}(\eps)$  as $| {\bf x} |\rightarrow \infty$.
\end{lem}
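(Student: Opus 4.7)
The strategy is to first obtain a pointwise decay estimate on $\hat\psi$ and $\hat\theta$, then use this to bound the line integral, and finally exploit the curl-free condition $\partial_y\hat\psi=\partial_x\hat\theta$ encoded in $\mmD$ to identify the limit at infinity.

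First I would establish the pointwise estimate
\[
|\hat\psi(\mathbf{x})|+|\hat\theta(\mathbf{x})|\leq C\langle\mathbf{x}\rangle^{-\gamma-1}\left(\|\hat\psi\|_{M^{2,2}_\gamma}+\|\hat\theta\|_{M^{2,2}_\gamma}\right),
\]
via a standard dyadic rescaling argument. On the annulus $A_R=\{R<|\mathbf{x}|<2R\}$, the three pieces of the Kondratiev norm show that $\hat\psi$ restricted to $A_R$ has $L^2$, $H^1$, and $H^2$ contributions bounded by $R^{-\gamma}$, $R^{-\gamma-1}$, and $R^{-\gamma-2}$ respectively (after pulling the weights outside). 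Setting $v(\xi)=\hat\psi(R\xi)$ on $A_1=\{1<|\xi|<2\}$ and tracking how derivatives rescale yields $\|v\|_{W^{2,2}(A_1)}\lesssim R^{-1-\gamma}$; two-dimensional Sobolev embedding $W^{2,2}(A_1)\hookrightarrow C^0(\overline{A_1})$ then gives the claim on $A_R$, and continuity of $\hat\psi$ on $\mathbb{R}^2$ follows from the corresponding local embedding.

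With this decay in hand, uniform boundedness of $\phi_\mathrm{bd}$ is straightforward: the substitution $s=t|\mathbf{x}|$ yields
\[
\left|\int_0^1\bigl(\hat\psi(t\mathbf{x})x+\hat\theta(t\mathbf{x})y\bigr)\,dt\right|\leq C|\mathbf{x}|\int_0^1 (1+t|\mathbf{x}|)^{-\gamma-1}\,dt=\frac{C}{\gamma}\bigl(1-(1+|\mathbf{x}|)^{-\gamma}\bigr)\leq C/\gamma,
\]
which, together with continuity of the integrand, gives well-definedness, uniform boundedness, and (via dominated convergence in the parameter $\mathbf{x}$) continuity of $\phi_\mathrm{bd}$.

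For the limit at infinity, the key observation is that the constraint $\partial_y\hat\psi=\partial_x\hat\theta$ built into $\mmD$ guarantees that $(\hat\psi,\hat\theta)/(2k\tau)$ is a conservative vector field, so there exists a potential $\phi$ with $\nabla\phi=(\hat\psi,\hat\theta)/(2k\tau)$ and $\phi(0)=0$, and the line integral in the definition of $\phi_\mathrm{bd}$ is exactly $\phi(\mathbf{x})$. I would then fix a direction $\hat{\mathbf{n}}\in S^1$ and write
\[
\phi(R\hat{\mathbf{n}})=\int_0^R \nabla\phi(r\hat{\mathbf{n}})\cdot\hat{\mathbf{n}}\,dr,
\]
which converges absolutely as $R\to\infty$ by the decay $|\nabla\phi(\mathbf{x})|\lesssim\langle\mathbf{x}\rangle^{-\gamma-1}$ established in step one; call the limit $\Phi_\infty(\eps,\hat{\mathbf{n}})$. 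To show direction-independence, I would close the loop $0\to R\hat{\mathbf{n}}_1\to R\hat{\mathbf{n}}_2\to 0$ with a circular arc on $|\mathbf{x}|=R$: the arc has length $O(R)$ while $|\nabla\phi|=O(R^{-\gamma-1})$ on it, so the arc integral is $O(R^{-\gamma})\to 0$, and path independence of $\oint\nabla\phi\cdot d\mathbf{l}=0$ forces $\Phi_\infty$ to be independent of $\hat{\mathbf{n}}$. For non-radial approach $\mathbf{x}_n\to\infty$, one writes $\phi(\mathbf{x}_n)=\phi(|\mathbf{x}_n|\hat{\mathbf{n}}_0)+[\phi(\mathbf{x}_n)-\phi(|\mathbf{x}_n|\hat{\mathbf{n}}_0)]$, and estimates the bracket by an arc integral of the same type.

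The main obstacle is the first step: getting the precise pointwise decay rate from the Kondratiev norm requires the dyadic rescaling to correctly balance how the weights $\langle\mathbf{x}\rangle^{\gamma+|\alpha|}$ interact with the Jacobian and with the derivative-rescaling under $\xi\mapsto R\xi$. Once that decay is locked in, everything else reduces to standard fundamental-theorem-of-calculus arguments for conservative vector fields with integrable radial decay.
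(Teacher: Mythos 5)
Your proposal is correct, and in one respect it is more careful than the paper's own argument. The paper's proof is essentially two lines: continuity follows from $M^{2,2}_{\gamma}\subset BC^0$ (Lemma \ref{bounded}), the pointwise decay $|\hat\psi|,|\hat\theta|\lesssim |\mathbf{x}|^{-\gamma-1}$ is quoted from Lemma \ref{decay} in the appendix (proved there by a polar-coordinate estimate plus an $L^2$--$H^1$ interpolation inequality on circles), and the convergence of the ray integral is then asserted. Your dyadic-rescaling derivation of the same decay rate is a standard and equally valid alternative to Lemma \ref{decay}; your explicit computation of the $t$-integral reproduces the boundedness claim. Where you go beyond the paper is the last step: convergence of $\int_0^1(\hat\psi(t\mathbf{x})x+\hat\theta(t\mathbf{x})y)\,\rmd t$ along each ray only gives a direction-dependent limit for generic $\hat\psi,\hat\theta\in M^{2,2}_{\gamma}$ (take $\hat\theta=0$ and $\hat\psi$ a bump concentrated near the positive $x$-axis), so the statement that $\phi_\mathrm{bd}$ approaches a single constant genuinely needs the constraint $\partial_y\hat\psi=\partial_x\hat\theta$ from $\mmD$, which is not among the lemma's literal hypotheses and is not invoked in the paper's proof. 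Your argument --- identifying the line integral with a potential and killing the angular discrepancy by an arc integral of size $O(R\cdot R^{-\gamma-1})=O(R^{-\gamma})$ --- closes this gap cleanly (one could add a word about why a continuous, distributionally curl-free field on $\bbR^2$ admits a potential, e.g.\ by mollification, but this is routine). So: same overall architecture, a different but standard proof of the decay estimate, and a necessary extra step for direction-independence that the paper omits.
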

\begin{proof} Note that $\phi$ is continuous since $\hat{\psi}, \hat{\theta} \in M^{2,p}_{\gamma} \subset BC^0$.
Lemma \ref{decay} guarantees that for large $|{\bf x} |$ we have $|\hat{\psi}|, |\hat{\theta}| \leq C | {\bf x} | ^{-\gamma-1}$. Therefore, the integrals converge  as $| {\bf x} |\to \infty $.
\end{proof}

The next four lemmas will help us show that the operator $\hat{F}_{\eps, \varphi}: \mmD \times \bbR^3 \rightarrow \mmR$ is well defined.

\begin{lem}\label{littleimbedding}
There exists $C>0$ so that for all $u \in L^p_{\gamma}$ with $ Du \in L^p_{\gamma +1}$, $\langle {\bf x} \rangle^{\gamma}u  \in W^{1,p}$ and, 
\[
\|\langle {\bf x} \rangle^{\gamma}u\|_{W^{1,p}}\leq C \|u\|_{L^p_{\gamma}}+\|Du\|_{L^p_{\gamma +1}}.
\]
\end{lem}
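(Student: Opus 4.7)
The plan is to reduce the claim to the product rule for differentiation of $v:=\langle\mathbf{x}\rangle^\gamma u$. The $L^p$ part of the conclusion is trivial since $\|v\|_{L^p}=\|u\|_{L^p_\gamma}$, so the real content is the gradient bound. A preliminary pointwise observation is that the smooth weight $\langle\mathbf{x}\rangle^\gamma$ satisfies $|\nabla\langle\mathbf{x}\rangle^\gamma|=|\gamma|\,|\mathbf{x}|\,\langle\mathbf{x}\rangle^{\gamma-2}\leq |\gamma|\langle\mathbf{x}\rangle^{\gamma-1}$, since $|\mathbf{x}|\leq \langle\mathbf{x}\rangle$, and that $\langle\mathbf{x}\rangle^\gamma\leq\langle\mathbf{x}\rangle^{\gamma+1}$ because $\langle\mathbf{x}\rangle\geq 1$.

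For $u\in C_0^\infty$ the classical product rule then gives
\begin{equation*}
Dv=\gamma\,\langle\mathbf{x}\rangle^{\gamma-2}\mathbf{x}\,u+\langle\mathbf{x}\rangle^\gamma Du,
\end{equation*}
so that, applying the pointwise weight bound to the first summand and the elementary weight comparison to the second,
\begin{equation*}
\|Dv\|_{L^p}\leq |\gamma|\,\|\langle\mathbf{x}\rangle^{\gamma-1}u\|_{L^p}+\|\langle\mathbf{x}\rangle^{\gamma}Du\|_{L^p}\leq |\gamma|\,\|u\|_{L^p_\gamma}+\|Du\|_{L^p_{\gamma+1}}.
\end{equation*}
Combined with the trivial $L^p$ estimate this yields the claimed inequality with $C=1+|\gamma|$.

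It remains to extend the identity and the bound from $C_0^\infty$ to the general $u$ satisfying the hypotheses, which I would do either by a standard truncate-and-mollify approximation (the polynomial weights are compatible with cut-off and convolution, so $C_0^\infty$ is dense in the graph-norm space $\{u\in L^p_\gamma:Du\in L^p_{\gamma+1}\}$), or directly by interpreting the product rule distributionally, using that the smooth multiplier $\langle\mathbf{x}\rangle^\gamma$ acts continuously on the distributions at hand and that $\langle\mathbf{x}\rangle^\gamma u$ and $\langle\mathbf{x}\rangle^\gamma Du$ are locally integrable. There is no substantive obstacle; the only point requiring a line of care is this density/distributional step.
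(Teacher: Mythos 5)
Your argument is correct and coincides with the paper's own proof: both apply the product rule to $\langle\mathbf{x}\rangle^{\gamma}u$, bound $|\nabla\langle\mathbf{x}\rangle^{\gamma}|$ by $|\gamma|\langle\mathbf{x}\rangle^{\gamma-1}$, and absorb the $Du$ term using $L^p_{\gamma+1}\subset L^p_{\gamma}$. The closing remark on density/distributional justification is a small extra point of care that the paper leaves implicit (the weighted spaces are defined as completions of $C_0^\infty$), and it is handled appropriately.
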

 
 \begin{proof} We need to show that $D(  \langle {\bf x} \rangle^{\gamma} u) \in L^p$. We compute
 \[ D( \langle {\bf x} \rangle^{\gamma} u ) =   D u \cdot \langle {\bf x} \rangle^{\gamma}+ D \langle {\bf x} \rangle^{\gamma} \cdot u= Du\cdot  \langle {\bf x} \rangle^{\gamma} +\gamma  u   {\bf x} (1 + |{\bf x}|^2) ^{\frac{\gamma -2}{2}}.\]
 Since $Du \in L^p_{\gamma+1} \subset L^p_{\gamma}$, we conclude that $Du \cdot  \langle {\bf x} \rangle^{\gamma} \in L^p$. Furthermore, since $|{\bf x}|^p \leq (1 +|{\bf x}|^2)^{p/2}$,
\[ |u \cdot D\langle {\bf x} \rangle^{\gamma} | \leq   |u| |{\bf x} |\langle {\bf x} \rangle^{(\gamma-2)}  \leq   |u|  \langle {\bf x} \rangle^{(\gamma-1)} \leq  |u|  \langle {\bf x} \rangle^{\gamma } \in L^p. \]
This implies that $D(\langle {\bf x} \rangle^{\gamma} u) \in L^p$ and we obtain $ \langle {\bf x} \rangle^{\gamma}  u\in W^{1,p}$.
\end{proof}

\begin{lem}\label{bounded}
For $\gamma >0$, we have the continuous embeddings $M^{2,2}_{\gamma} \hookrightarrow W^{2,2}_{\gamma} \hookrightarrow W^{2,2}\hookrightarrow BC^0$.
\end{lem}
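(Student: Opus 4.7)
The plan is to verify each of the three embeddings separately; none of them requires any serious work, since two are pointwise comparisons of weights and the third is a direct appeal to the classical Sobolev embedding theorem.

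For the first inclusion $M^{2,2}_{\gamma}\hookrightarrow W^{2,2}_{\gamma}$, I would compare the two norms term by term. Since $\langle \mathbf{x}\rangle = (1+|\mathbf{x}|^2)^{1/2}\geq 1$ on $\mathbb{R}^2$ and $|\alpha|\geq 0$, we have the pointwise inequality $\langle \mathbf{x}\rangle^{\gamma}\leq \langle \mathbf{x}\rangle^{\gamma+|\alpha|}$, so that
\[
\|\langle \mathbf{x}\rangle^{\gamma}D^{\alpha}u\|_{L^2}\leq \|\langle \mathbf{x}\rangle^{\gamma+|\alpha|}D^{\alpha}u\|_{L^2}
\]
for every multi-index $|\alpha|\leq 2$. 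Summing over $\alpha$ gives $\|u\|_{W^{2,2}_{\gamma}}\leq \|u\|_{M^{2,2}_{\gamma}}$.

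For the second inclusion $W^{2,2}_{\gamma}\hookrightarrow W^{2,2}$, the same idea applies in reverse: since $\gamma>0$, we have $1\leq \langle \mathbf{x}\rangle^{\gamma}$ pointwise, so each term in the unweighted $W^{2,2}$-norm is bounded by the corresponding term in the weighted $W^{2,2}_{\gamma}$-norm. Summing yields $\|u\|_{W^{2,2}}\leq \|u\|_{W^{2,2}_{\gamma}}$.

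For the final inclusion $W^{2,2}(\mathbb{R}^2)\hookrightarrow BC^0(\mathbb{R}^2)$, I would invoke the classical Sobolev embedding theorem: with $s=2$, $p=2$, and $n=2$, we have $sp=4>n$, which gives the continuous embedding into the space of bounded continuous functions equipped with the sup norm. Composing the three continuous inclusions produces the chain in the statement. There is no real obstacle here; the only point to remark is that the hypothesis $\gamma>0$ is used in the second step (so that the weight is bounded below by $1$), while the first and third steps would work for any $\gamma\in\mathbb{R}$.
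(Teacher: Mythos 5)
Your proof is correct and follows essentially the same route as the paper's one-line argument: the second embedding from $\gamma>0$ and the third from the classical Sobolev embedding $W^{2,2}(\bbR^2)\hookrightarrow BC^0$. The only cosmetic difference is in the first step, where the paper cites its Lemma \ref{littleimbedding} while you give the direct pointwise comparison $\langle\mathbf{x}\rangle^{\gamma}\leq\langle\mathbf{x}\rangle^{\gamma+|\alpha|}$, which is simpler and entirely adequate for the embedding as stated.
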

\begin{proof}
The first embedding is due to Lemma \ref{littleimbedding}, the second a consequence of $\gamma >0$, and the last a classical Sobolev embedding in dimension 2.
\end{proof}

\begin{lem}\label{Holder3}
For $\gamma>0$ there exists $C>0$ such that for all $f,g $ with $\langle {\bf x} \rangle^{\gamma+1} f, \langle {\bf x} \rangle^{\gamma+1} g \in W^{1,p}$,
\[
\|fg\|_{L^p_{\gamma+2}}\leq C\|\langle {\bf x} \rangle^{\gamma+1} f\|_{W^{1,p}}\|\langle {\bf x} \rangle^{\gamma+1} g\|_{W^{1,p}}.
\]
\end{lem}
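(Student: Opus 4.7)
The plan is to factor out the weight and reduce the estimate to a product estimate in unweighted Sobolev spaces via H\"older's inequality. Set $F := \langle {\bf x}\rangle^{\gamma+1} f$ and $G := \langle {\bf x}\rangle^{\gamma+1} g$, so that by hypothesis $F,G \in W^{1,p}(\bbR^2)$. The weighted $L^p$ norm on the left rewrites as
\[
\|fg\|_{L^p_{\gamma+2}} = \|\langle {\bf x}\rangle^{\gamma+2} fg\|_{L^p} = \|\langle {\bf x}\rangle^{-\gamma}\, F G\|_{L^p},
\]
so the target is to bound the right-hand side by $C\|F\|_{W^{1,p}}\|G\|_{W^{1,p}}$.

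The next step is to apply the generalized H\"older inequality with three exponents $r,q,q$ satisfying $1/r + 2/q = 1/p$, giving
\[
\|\langle {\bf x}\rangle^{-\gamma} F G\|_{L^p} \leq \|\langle {\bf x}\rangle^{-\gamma}\|_{L^r}\, \|F\|_{L^q}\, \|G\|_{L^q}.
\]
I choose $r>2/\gamma$, which is possible exactly because $\gamma>0$; this guarantees that $\langle {\bf x}\rangle^{-\gamma} \in L^r(\bbR^2)$ so the first factor is finite. The remaining exponent $q$, determined by $1/q = (1/p - 1/r)/2$, needs to lie in the range of the two-dimensional Sobolev embedding $W^{1,p}(\bbR^2) \hookrightarrow L^q$. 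For $p\geq 2$ any finite $q\geq p$ is admissible, so a large enough $r$ works trivially; for $p\in(1,2)$ the embedding requires $q\leq 2p/(2-p)$, equivalently $1/r \leq 1 - 1/p$, which is compatible with $r>2/\gamma$ since $1-1/p > 0$. In every case one can pick such $r$, and then by Sobolev embedding $\|F\|_{L^q}\leq C\|F\|_{W^{1,p}}$, similarly for $G$.

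Combining the two displayed inequalities with the Sobolev bounds yields the claimed estimate with $C$ depending only on $\gamma$ and $p$. The only nontrivial point is the simultaneous choice of exponents, and the hypothesis $\gamma>0$ is essential precisely here: without it, $\langle{\bf x}\rangle^{-\gamma}\notin L^r(\bbR^2)$ for any $r<\infty$, and one cannot exploit the weight to absorb the loss coming from passing from $L^p$ up to the higher Lebesgue space produced by Sobolev embedding.
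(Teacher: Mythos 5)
Your argument is correct. It uses the same basic toolkit as the paper --- H\"older's inequality plus the two-dimensional Sobolev embedding --- but with a genuinely different distribution of the weight. The paper splits $\langle\mathbf{x}\rangle^{\gamma+2}$ symmetrically as $\langle\mathbf{x}\rangle^{1+\gamma/2}\cdot\langle\mathbf{x}\rangle^{1+\gamma/2}$ and applies the two-factor Cauchy--Schwarz estimate $\|fg\langle\mathbf{x}\rangle^{\gamma+2}\|_{L^p}\le\|f\langle\mathbf{x}\rangle^{1+\gamma/2}\|_{L^{2p}}\,\|g\langle\mathbf{x}\rangle^{1+\gamma/2}\|_{L^{2p}}$; there $\gamma>0$ enters only through the pointwise comparison $\langle\mathbf{x}\rangle^{1+\gamma/2}\le\langle\mathbf{x}\rangle^{\gamma+1}$, and the fixed embedding $W^{1,p}(\bbR^2)\hookrightarrow L^{2p}$ finishes the proof with no exponents to choose. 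You instead load the full weight $\langle\mathbf{x}\rangle^{\gamma+1}$ onto each factor and peel off the surplus $\langle\mathbf{x}\rangle^{-\gamma}$ as a third H\"older factor in $L^r$ with $r>2/\gamma$, so that $\gamma>0$ is spent on integrability of the leftover weight rather than on a pointwise bound. Both routes are valid for all $p\in(1,\infty)$ (the paper only needs $p=2$), and both use dimension $n=2$ and the strictness of $\gamma>0$ in an essential way; the price of your version is the compatibility check between $r$ and the Sobolev exponent $q$, which you carry out correctly, while the paper's version is a one-liner with the exponent $2p$ hard-wired.
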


\begin{proof} By Cauchy-Schwartz,
\[
\|fg\langle {\bf x} \rangle^{(\gamma+2)}\|_{L^p}\leq \|f\langle {\bf x} \rangle^{(1+(\gamma/2))}\|_{L^{2p}} \|g\langle {\bf x} \rangle^{(1+(\gamma/2))}\|_{L^{2p}}
\]
which proves the lemma using $\gamma>0$ and the Sobolev embedding $W^{1,p}\hookrightarrow L^{2p}$, in $n=2$.
\end{proof}

Fredholm properties of the Laplacian imply in particular the following more basic estimate  \cite{nirenberg1973null}.
\begin{lem}\cite[Theorem 3.1]{nirenberg1973null}\label{nirenberg}
 If $u \in L^p_{\gamma}$ and $\Delta u \in L^p_{\gamma+2}$ then $u \in M^{2,p}_{\gamma}$ and there exists a constant $C$ such that
 \[  \| u \|_{M^{2,p}_{\gamma}} \leq C \left( \| u\|_{L^p_{\gamma}} + \| \Delta u\|_{L^p_{\gamma+2}}   \right)  .\]
\end{lem}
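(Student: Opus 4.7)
\textbf{Plan for the proof of Lemma \ref{nirenberg}.} My overall approach is a standard dyadic-rescaling argument, reducing the weighted estimate on $\mathbb{R}^n$ to the classical unweighted interior $W^{2,p}$ estimate for the Laplacian on a fixed annulus. The point of the Kondratiev norm is that its weights $\langle\mathbf{x}\rangle^{\gamma+|\alpha|}$ exactly balance the homogeneity of the scaling, so that the estimate obtained on each annulus can be summed without loss.

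First I would fix a smooth dyadic partition of unity $1=\chi_0(\mathbf{x})+\sum_{j\geq 1}\chi_j(\mathbf{x})$ with $\chi_j$ supported in the annulus $A_j=\{2^{j-1}\le|\mathbf{x}|\le 2^{j+1}\}$ for $j\ge 1$ and $\chi_0$ supported in the ball $|\mathbf{x}|\le 2$. On the central ball, the required bound $\|u\|_{W^{2,p}(|\mathbf{x}|\le 2)}\lesssim\|u\|_{L^p(|\mathbf{x}|\le 3)}+\|\Delta u\|_{L^p(|\mathbf{x}|\le 3)}$ is just classical interior elliptic regularity for $\Delta$ on a slightly enlarged ball. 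For the dyadic annuli $A_j$ with $j\ge 1$, I rescale by setting $u_j(\mathbf{y}):=u(2^j\mathbf{y})$ on the fixed annulus $A=\{1/2\le|\mathbf{y}|\le 2\}$, so that $\Delta u_j(\mathbf{y})=4^j(\Delta u)(2^j\mathbf{y})$, and apply the standard unweighted interior estimate
\[
\sum_{|\alpha|\le 2}\|D^\alpha u_j\|_{L^p(A)}\le C\bigl(\|u_j\|_{L^p(A')}+\|\Delta u_j\|_{L^p(A')}\bigr),
\]
where $A'\supset A$ is a slightly larger fixed annulus. The constant $C$ is independent of $j$ because the equation and geometry on $A$ are.

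Next I would change variables back. Using $D^\alpha u_j(\mathbf{y})=2^{j|\alpha|}(D^\alpha u)(2^j\mathbf{y})$ and $\|D^\alpha u_j\|_{L^p(A)}^p=2^{j(|\alpha|p-n)}\|D^\alpha u\|_{L^p(A_j)}^p$, the inequality translates into
\[
\sum_{|\alpha|\le 2}2^{jp(|\alpha|-n/p)}\|D^\alpha u\|_{L^p(A_j)}^p\le C\bigl(2^{-jn}\|u\|_{L^p(A_j')}^p+2^{jp(2-n/p)}\|\Delta u\|_{L^p(A_j')}^p\bigr),
\]
with $A_j'$ the dyadic enlargement of $A_j$. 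Multiplying by $2^{jp(\gamma+n/p)}$ and noting that $\langle\mathbf{x}\rangle\sim 2^j$ on $A_j$, the left side is equivalent to $\sum_{|\alpha|\le 2}\|\langle\mathbf{x}\rangle^{\gamma+|\alpha|}D^\alpha u\|_{L^p(A_j)}^p$, and the right side becomes $C\bigl(\|\langle\mathbf{x}\rangle^\gamma u\|_{L^p(A_j')}^p+\|\langle\mathbf{x}\rangle^{\gamma+2}\Delta u\|_{L^p(A_j')}^p\bigr)$. Summing over $j\ge 0$ (and appending the central-ball estimate for $j=0$), and using that the $A_j'$ have finite overlap, yields $\|u\|_{M^{2,p}_\gamma}\le C(\|u\|_{L^p_\gamma}+\|\Delta u\|_{L^p_{\gamma+2}})$.

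The only real technical point is the consistency of the weight bookkeeping, namely that the powers of $2^j$ generated by the scaling exactly match the powers of $\langle\mathbf{x}\rangle$ built into the Kondratiev norm; this is precisely the raison d'\^etre of the space $M^{2,p}_\gamma$ and is where the argument would fail for a classical Sobolev weight $W^{2,p}_\gamma$. A secondary technical step is density: the inequality is first derived for $u\in C_0^\infty$, and then extended to general $u\in L^p_\gamma$ with $\Delta u\in L^p_{\gamma+2}$ by a standard cutoff-and-mollify approximation, using the weighted estimate itself to show that the approximating sequence is Cauchy in $M^{2,p}_\gamma$.
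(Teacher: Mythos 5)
The paper does not prove this lemma at all: it is quoted verbatim from Nirenberg--Walker \cite[Theorem 3.1]{nirenberg1973null}, so there is no in-paper argument to compare against. Your dyadic-rescaling proof is the standard way to establish such Kondratiev-type a priori estimates, and the core of it is correct: the scaling identity $\|D^\alpha u_j\|_{L^p(A)}^p=2^{j(|\alpha|p-n)}\|D^\alpha u\|_{L^p(A_j)}^p$, the multiplication by $2^{jp(\gamma+n/p)}$, the equivalence $\langle\mathbf{x}\rangle\sim 2^j$ on $A_j$, and the finite-overlap summation all check out, and you correctly identify that the derivative-dependent weights $\langle\mathbf{x}\rangle^{\gamma+|\alpha|}$ are exactly what makes the powers of $2^j$ match.

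The one step I would push back on is the final density argument. You propose to prove the inequality for $u\in C_0^\infty$ and then extend to general $u$ by ``cutoff-and-mollify,'' using the estimate to show the approximants are Cauchy in $M^{2,p}_\gamma$. For that to work you need the approximants $u_R$ to converge to $u$ in $L^p_\gamma$ \emph{and} $\Delta u_R$ to converge to $\Delta u$ in $L^p_{\gamma+2}$; but cutting off produces the commutator $\Delta(\chi_R u)-\chi_R\Delta u=2\nabla\chi_R\cdot\nabla u+u\,\Delta\chi_R$, and the term $2\nabla\chi_R\cdot\nabla u$ is controlled in $L^p_{\gamma+2}$ only if you already know $\nabla u\in L^p_{\gamma+1}$ --- which is part of the conclusion, not the hypothesis. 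The clean repair is to reverse the order: classical interior elliptic regularity already gives $u\in W^{2,p}_{\mathrm{loc}}$ for a distributional solution, and the unweighted interior estimate on each annulus $A_j$ holds for such $u$ directly, so you can run your dyadic summation on $u$ itself and obtain finiteness of the $M^{2,p}_\gamma$-norm first; only afterwards, with all weighted derivative norms in hand, do you verify (if needed) that $u$ lies in the closure of $C_0^\infty$. With that reordering the proof is complete.
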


\begin{rem}\label{betters}
Notice that $u,w\in L^2_{\gamma+2}$ so that $ \Delta s =u+w\in L^2_{\gamma+2}$, by the above lemma  we have that, within the closed subset $\mmD\subset \mmX$, $s \in M^{2,2}_{\gamma}$ with uniform bounds in terms of $s,u,w$.
\end{rem}

To proof Theorem \ref{MainTh}, the following three lemmas establish that each component of the operator $\hat{F}_{\eps, \varphi} : \mmD \times \bbR^3 \rightarrow \mmR$ is well defined. Throughout, we use the standing assumptions  $0<\gamma<1$ and  $g \in W^{2,2}_{\beta}$, with $\beta >\gamma +2$.
\begin{lem}
The component  $\hat{F}_1 : \mmD \times \bbR^3 \rightarrow L^2_{\gamma}$ is well defined. 
\end{lem}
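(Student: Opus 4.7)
The plan is to verify term-by-term that each piece of the expression for $\hat{F}_1$ lies in $L^2_\gamma$. Before expanding, I would set up two preliminary facts. First, within $\mmD$ the constraints $\hat{u}=\partial_{xx}\hat{s}$ and $\hat{w}=\partial_{yy}\hat{s}$ give $\Delta\hat{s}=\hat{u}+\hat{w}\in L^2_{\gamma+2}$, which combined with $\hat{s}\in W^{2,2}_\gamma\subset L^2_\gamma$ and Lemma \ref{nirenberg} upgrades $\hat{s}$ to $M^{2,2}_\gamma$ (this is Remark \ref{betters}). Together with Lemma \ref{bounded}, this gives $\hat{s},\hat{\psi},\hat{\theta}\in L^\infty\cap L^2_\gamma$. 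Second, Lemma \ref{phasebounded} together with the explicit logarithm shows that the $\phi$ defined by \eqref{defphi} is real-valued, so $|\rme^{-\rmi(kx+\phi)}|=1$ pointwise.

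Next I substitute $s=\hat{s}+\hat{c}P_1$, $\psi=\hat{\psi}+\hat{c}P_2$, $\theta=\hat{\theta}+\hat{c}P_3$ and group the resulting terms. The only potentially dangerous contributions are those proportional to $\hat{c}P_1$ or $\hat{c}P_2$ appearing in the linear combination $(\Delta-2\tau^2)s-\psi$, since $P_1,P_2\sim|\mathbf{x}|^{-1}$ individually fail to lie in $L^2_\gamma$ for $\gamma\geq 0$. The decisive observation is the algebraic identity $P_2=-2\tau^2 P_1$, which follows immediately from the definitions: with $b=4k^2$ and $\alpha=(1-k^2)/(1-3k^2)$, a short computation gives $b\alpha/(1-\alpha)=-2\tau^2$. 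Hence the non-integrable terms cancel and the linear-in-hats portion collapses to $(\Delta-2\tau^2)\hat{s}-\hat{\psi}+\hat{c}\Delta P_1$, which matches the first row of $\hat{T}$ in \eqref{newT}. Each piece lies in $L^2_\gamma$: $\Delta\hat{s}\in L^2_{\gamma+2}\subset L^2_\gamma$ from $\hat{s}\in M^{2,2}_\gamma$; $\hat{\psi}\in L^2_\gamma$ from the embedding $M^{2,2}_\gamma\hookrightarrow L^2_\gamma$; and $\Delta P_1$, being the Laplacian of a smooth function that decays like $|\mathbf{x}|^{-1}$, decays like $|\mathbf{x}|^{-3}$ and so lies in $L^2_\gamma$ for all $\gamma<2$.

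The remaining nonlinear contributions, namely $-s\psi/\tau$, $-(s+\tau)(\psi^2+\theta^2)/(2k\tau)^2$, and $-(s^3+3s^2\tau)$, I would expand into monomials in the hat variables and the $P_i$'s and estimate using two templates: (a) any factor $\hat{s},\hat{\psi},\hat{\theta}\in L^2_\gamma$ paired against bounded factors (via $L^\infty$ from Lemma \ref{bounded} for other hats, and trivially for the $P_i$'s, which are smooth and decay at infinity); (b) a pure product $P_iP_j$, which decays like $|\mathbf{x}|^{-2}$ and lies in $L^2_\gamma$ precisely when $\gamma<1$, exactly the lemma's hypothesis. Triple products $P_iP_jP_k$ decay like $|\mathbf{x}|^{-3}$ and present no difficulty. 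For the inhomogeneous term, $|\eps\Real[g\rme^{-\rmi(kx+\phi)}]|\leq\eps|g|$ and $g\in W^{2,2}_\beta\hookrightarrow L^2_\beta\hookrightarrow L^2_\gamma$ since $\beta>\gamma+2>\gamma$.

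The main obstacle, or rather the only genuinely delicate point, is the cancellation $P_2=-2\tau^2 P_1$ in the linear part: without the specific value of $\alpha$ baked into $P_1$, the coefficient of $\hat{c}$ in $\hat{F}_1$ would decay only like $|\mathbf{x}|^{-1}$ and fall outside $L^2_\gamma$. All remaining estimates are routine consequences of the embeddings collected in Section \ref{weightedspaces}, the algebraic decay of $P_1,P_2,P_3$, and the hypothesis $\gamma<1$.
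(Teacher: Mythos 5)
Your proof is correct and follows essentially the same route as the paper's: identify the linear part with the first row of $\hat{T}$, then estimate the nonlinear terms via boundedness of the hat variables, the $|\mathbf{x}|^{-2}$ decay of products $P_iP_j$ (using $\gamma<1$), and the localization of $g$. You make explicit the cancellation $P_2=-2\tau^2 P_1$ that the paper dismisses as ``a short calculation,'' which is indeed the one non-routine point; everything else matches.
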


\begin{proof} We can rewrite
\[\hat{F}_1 (\xi;\eps, \varphi) = \Delta s - 2\tau^2 s - \psi  -  \frac{s \psi}{\tau}  - (s + \tau) (  \frac{1}{(2k\tau)^2} (\psi^2 + \theta^2)) - (s^3 + 3s^2 \tau) + \eps \Real [ g\rme^{-\rmi( kx +  \phi(\varphi))}]. \]

A short calculation shows that $\Delta s - 2\tau^2 s - \psi = ( \Delta - 2\tau^2) \hat{s} -\hat{\psi} - c \Delta  P_1$ is the first component of $\hat{T}$, thus well defined. Consider next the term
$ s \psi = ( \hat{s} + c  P_1 ) (\hat{\psi} + c P_2 ). $

Notice that $\hat{s}\psi$ is in $L^2_{\gamma}$ since both $ P_2$ and $\hat{\psi}$, are bounded by Lemma \ref{bounded} and $\hat{s} \in  L^2_{\gamma}$. Since $\hat{\psi} \in L^2_{\gamma}$ and since $ P_1$ is bounded, $\hat{\psi} P_1\in L^2_{\gamma}$ as well. Notice also  that  the product $ P_1\cdot P_2$ is bounded in compact sets and behaves like $ \dfrac{1}{|{\bf x}|^2} $ for large values of $|{\bf x}|$, hence it belongs to $L^2_{\gamma}$ provided $\gamma<1$. This shows the term $ s \psi $ is in the desired space.

Using similar arguments, it is easy to check  that the functions $\psi^2, \theta^2, s^3,s^2$ and $s( \psi^2+ \theta^2)$ are in $L^2_\gamma$. Finally, by Lemma \ref{phasebounded} we know $\phi$ is a bounded continuous function so that we can conclude $\rme^{-\rmi(kx + \phi(\varphi))}$ is a well defined function  in $L^{\infty}$. This implies that the term $\Real [ g\rme^{-\rmi(kx + \phi(\varphi) )}]$ is in $L^2_{\gamma}$ since $g \in L^2_{\beta} \subset L^2_{\gamma}$.
\end{proof}

\begin{lem}\label{welldefinedF1}
The component   $\hat{F}_2 : \mmD \times \bbR^3 \rightarrow L^2_{\gamma+2}$ is well defined.
\end{lem}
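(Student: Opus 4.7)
The plan is to mimic the strategy already used for $\hat F_1$: split $\hat F_2$ into a linear part identical to the second row of $\hat T$ and a remainder consisting of algebraic products plus a $g$-driven contribution, then bound each remainder term in $L^2_{\gamma+2}$ by combining Lemmas \ref{bounded}, \ref{littleimbedding}, \ref{Holder3}, Remark \ref{betters}, and the explicit decay of the far-field correctors $P_i$. I would work on the open subset $\{|s+\tau|>\tau/2\}$ of $\mmD\times\bbR^3$, which is open because $\mmD$ embeds continuously into $BC^0$ in the $s$-slot by Lemma \ref{bounded}, so that $1/(s+\tau)$ and $1/(s+\tau)^2$ are bounded there. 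Writing $(2k)^2\tau u/(s+\tau)=bu-bus/(s+\tau)$ peels off $\Delta\psi+bu$, which is the second row of $\hat T\xi$ and is therefore automatically in $L^2_{\gamma+2}$ by Section \ref{sectionlinear}.

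The remaining algebraic terms split into three families, each handled uniformly. Terms of the form $u\cdot\text{(bounded)}/(s+\tau)$, such as $u\psi/(s+\tau)$, $v\theta/(s+\tau)$, $bus/(s+\tau)$: here $u,v\in L^2_{\gamma+2}$ and Lemma \ref{bounded} gives the other factor in $L^\infty$, so the product lies in $L^2_{\gamma+2}$. For the products of first derivatives $s_x\psi_x$, $s_y\psi_y$, $s_x^2$, $s_xs_y$ appearing in the quadratic-in-gradient block, the key is Remark \ref{betters}, which upgrades $s$ inside $\mmD$ from $W^{2,2}_\gamma$ to $M^{2,2}_\gamma$; Lemma \ref{littleimbedding} then places $\langle\mathbf{x}\rangle^{\gamma+1}s_x,\langle\mathbf{x}\rangle^{\gamma+1}s_y\in W^{1,2}$, and the same holds for the $\hat\psi,\hat\theta$ contributions to $\psi_x,\psi_y$ because $\hat\psi,\hat\theta\in M^{2,2}_\gamma$, so Lemma \ref{Holder3} delivers each product in $L^2_{\gamma+2}$. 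The $\hat c\,\partial_xP_2$ contribution to $\psi_x$ is absorbed via its explicit $O(1/|\mathbf{x}|^2)$ decay from \eqref{e:p1p2} as an $L^\infty$-multiplier of $s_x\in L^2_{\gamma+1}\subset L^2_{\gamma+2}$ after accounting for the extra decay. Cubic terms of type $s_x^2\psi/(s+\tau)^2$ and $s_xs_y\theta/(s+\tau)^2$ then follow by multiplying the previous estimates by the $L^\infty$-bounded $\psi/(s+\tau)^2$ or $\theta/(s+\tau)^2$.

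For the $g$-contribution I would expand $\partial_x(g\rme^{-\rmi(kx+\phi)})=[\partial_xg-\rmi(k+\partial_x\phi)g]\rme^{-\rmi(kx+\phi)}$. Lemma \ref{phasebounded} together with differentiation of \eqref{defphi} gives $\phi,\partial_x\phi\in L^\infty$: the logarithmic piece $\partial_x\phi_{\log}$ has explicit $O(1/|\mathbf{x}|)$ decay (with a smooth compactly supported remainder from $\chi_x$), while $\partial_x\phi_{\mathrm{bd}}$ is controlled by $\hat\psi\in M^{2,2}_\gamma\subset BC^0$. Since $g\in W^{2,2}_\beta$ with $\beta>\gamma+2$, the weighted Sobolev embedding yields $g\in L^\infty_\beta$ and $\partial_xg\in L^2_\beta\subset L^2_{\gamma+2}$, so the first $g$-term sits in $L^2_{\gamma+2}$ after dividing by bounded $1/(s+\tau)$. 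The last $g$-term is bounded pointwise by $\|g\|_{L^\infty_\beta}\langle\mathbf{x}\rangle^{-\beta}|s_x|/(s+\tau)^2$, and its $L^2_{\gamma+2}$-norm is controlled by $\|g\|_{L^\infty_\beta}\|s_x\|_{L^2}$ because $\gamma+2-\beta<0$, which is finite since $s_x\in L^2_{\gamma+1}\subset L^2$ under $\gamma>-1$.

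The main obstacle I anticipate is coordinating the two incompatible weight conventions simultaneously: the Kondratiev weights $\gamma+|\alpha|$ scaling with derivatives, which Remark \ref{betters} activates for $s$ so that Lemma \ref{Holder3} can be applied to products of derivatives, and the fixed Sobolev weight $\beta$ carried by $g$. The $g$-driven terms, which couple a weighted derivative $s_x$ with the fixed-weight datum $g$, are the most delicate to estimate; the cleanest route I see is to pass through the $L^\infty_\beta$-bound on $g$ and the $L^\infty$-bound on $\partial_x\phi$, using $\beta>\gamma+2$ to soak up the derivative weight on $s_x$.
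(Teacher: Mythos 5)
Your overall route is the paper's: peel off the second row of $\hat T$, handle the algebraic remainders with Lemmas \ref{bounded}, \ref{littleimbedding}, \ref{Holder3} and Remark \ref{betters}, and treat the $g$-driven terms via boundedness of $\phi$ and the hypothesis $\beta>\gamma+2$. The gradient products $s_x\psi_x$, $s_x^2$, $s_xs_y$ and the $\varepsilon$-terms are argued essentially as in the paper and are fine.

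There is, however, one genuine gap, and it sits exactly where the paper locates ``the most dangerous term''. In the family you dispose of as $u\cdot(\text{bounded})/(s+\tau)$ you assert $u,v\in L^2_{\gamma+2}$. That holds only for the hatted components $\hat u,\hat v$; the variables entering $\hat F_2$ are $u=\hat u+\hat c\,\partial_{xx}P_1$ and $v=\hat v+\hat c\,\partial_{xy}P_1$, and $\partial_{xx}P_1,\partial_{xy}P_1= \rmO(|\mathbf{x}|^{-3})$ are \emph{not} in $L^2_{\gamma+2}$ for $\gamma\in(0,1)$, since $\int r^{-6}\,r^{2(\gamma+2)}\,r\,\rmd r=\int r^{2\gamma-1}\,\rmd r$ diverges. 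So ``$L^2_{\gamma+2}$ times $L^\infty$'' does not cover the corrector contributions to $u\psi$, $v\theta$ and $us$. The conclusion is still true, but only because the partner factor carries algebraic decay that must actually be used: the corrector--corrector product $P_1\partial_{xx}P_1=\rmO(|\mathbf{x}|^{-4})$ lies in $L^2_{\gamma+2}$ precisely because $\gamma<1$ (this is the explicit radial integral the paper singles out), and for $\partial_{xx}P_1\cdot\hat\psi$ one needs the quantitative decay $|\hat\psi|\lesssim\langle\mathbf{x}\rangle^{-\gamma-1}$ of Lemma \ref{decay}, since mere boundedness from Lemma \ref{bounded} again produces the divergent $\int r^{2\gamma-1}\,\rmd r$. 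You do invoke the $P_i$ decay for the $\partial_xP_2$ piece of $\psi_x$, so the idea is available to you; it just has to be deployed on the $u$- and $v$-terms as well, where it is not optional.
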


\begin{proof} Since we are trying to find solutions near $\xi=0$ we can assume $s/\tau$ is close to zero. We can therefore write
\begin{align*}
 \Delta \psi + (2k)^2 \frac{\tau u}{s +\tau} &= \Delta \psi + (2k)^2 \left(u+\frac{su}{\tau+su}\right)\\
 & = \Delta \hat{\psi} + (2k)^2( \hat{u} + c  \Delta P_2 + (2k)^2 \partial_{xx} P_2 ) + (2k)^2\frac{(\hat{u} + c \partial_{xx}  P_1))(\hat{s}+cP_1)}{\tau+(\hat{u} + c \partial_{xx}  P_1))(\hat{s}+cP_1)}.
 \end{align*}
Notice that the terms $ \Delta \hat{\psi} + (2k)^2 \hat{u} + c [ \Delta P_2 + (2k)^2 \partial_{xx}P_2 ], $ 
represent the second component of the linear operator $\hat{T}: \mmD \times \bbR \rightarrow \mmR$, hence  are well defined. 
It is now straightforward to see that the remaining nonlinear terms are contained in $L^2_{\gamma+2}$. In terms of localization, the most dangerous term is $P_1\partial_{xx}P_1$, which can be bounded as 
\[ \int \left | P_1  \partial_{xx}  P_1 \right|^2 \langle {\bf x} \rangle^{2(\gamma+2)} \leq \int \left| \frac{1}{r^4} \right|^2 r^{2(\gamma+2)} r \rmd r < \infty,\]
since $\gamma<1$.

We  next treat the remaining nonlinearities. Since $s \in L^{\infty}$, we only need to show that the numerators in $\hat{F}_2$ are in $L^2_{\gamma+2}$. It is not hard to mimic the above arguments to show that the terms $u \psi$ and $v \theta$ are in $L^2_{\gamma+2}$, so we will treat the term $s_x \psi_x$ first.  Using the formulas in (\ref{changevariables}) we see that
\[ s_x \psi_x = ( \hat{s}_x + c \partial_{x} P_1) ( \hat{\psi}_x+ c \partial_x  P_2 ).\]

By Lemma \ref{nirenberg} and Remark \ref{betters} we know that $\hat{s}\in M^{2,2}_{\gamma}$. Therefore,  $\hat{s}_x,\hat{\psi}_x\in W^{1,2}_{\gamma+1}$ and we can apply Lemma \ref{Holder3}  to conclude that $\hat{s}_x \hat{\psi}_x\in L^2_{\gamma+2}$. The remaining terms are easily seen to be in the correct space.

Similar arguments show that the functions $(s_x)^2, s_y \psi_y, s_xs_y$ are in the correct space and that, since $ \psi$ and $\theta$ are bounded by Lemma \ref{bounded}, $s_x( s_x \psi+ s_y \theta)\in L^2_{\gamma+2}$. 

Finally, because we are assuming that $g$ is in the space $W^{2,2}_{\beta}$, with $\beta > \gamma+2$, $s_x \in L^2_{\gamma+1}$, and because the terms $\psi$ and $\rme^{-\rmi(kx +  \phi(\varphi))}$ are bounded,
\[-\frac{\eps s_x \Imag[g \rme^{-\rmi(kx +\phi(\varphi))}] }{(s+ \tau)^2} +  \frac{\eps \Imag[ \partial_x( g \rme^{-\rmi(kx +\phi(\varphi))})]}{s+ \tau} \in L^2_{\gamma+2}.\]
 Here, we used  the fact that  $\psi = (2 k \tau) \phi$ so that  $ \partial_x  ( g \rme^{-\rmi(kx +\phi(\varphi))})= [ g_x -i g( k + \psi/(2k\tau)) ] \rme^{-\rmi(kx + \phi(\varphi))} $.
\end{proof}

\begin{lem}
The component $\hat{F}_3 : \mmD \times \bbR^3 \rightarrow L^2_{\gamma+2}$ is well defined.
\end{lem}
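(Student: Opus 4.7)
The argument parallels closely the proof of Lemma \ref{welldefinedF1} for $\hat{F}_2$, with the role of $x$-derivatives replaced systematically by $y$-derivatives and the coupling variable $\psi$ replaced by $\theta$ wherever the structure dictates. I outline the key steps.

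First, isolate the linear principal part by writing
\[
\Delta\theta+(2k)^2\frac{\tau v}{s+\tau}=\Delta\hat\theta+(2k)^2\bigl(\hat v+\hat c\,\Delta P_3+(2k)^2\partial_{xy}P_1\bigr)+(2k)^2\frac{(\hat v+\hat c\,\partial_{xy}P_1)(\hat s+\hat c\,P_1)}{\tau+(\hat v+\hat c\,\partial_{xy}P_1)(\hat s+\hat c\,P_1)},
\]
so that the bracketed contribution is precisely the third component of $\hat T$ and hence lies in $L^2_{\gamma+2}$ by Corollary \ref{FredholmT3}. The remaining fraction is a smooth bounded function of $\hat s$ times a function in $L^2_{\gamma+2}$, the only delicate piece being the logarithmic correction $P_1\partial_{xy}P_1$; this term behaves like $1/r^4$ at infinity, and the integral $\int |1/r^4|^2 r^{2(\gamma+2)}r\,\mathrm dr$ converges because $\gamma<1$, exactly as in the proof for $\hat F_2$.

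Next, treat the bilinear numerator contributions $v\psi$, $w\theta$, $s_x\theta_x$, $s_y\theta_y$. By Remark \ref{betters}, $\hat s\in M^{2,2}_\gamma$, so $\hat s_x,\hat s_y,\hat\theta_x,\hat\theta_y\in W^{1,2}_{\gamma+1}$; Lemma \ref{Holder3} then places products such as $\hat s_x\hat\theta_x$ and $\hat s_y\hat\theta_y$ in $L^2_{\gamma+2}$. The mixed products with the explicit functions $P_i$ and their derivatives decay like $1/r^2$ or faster at infinity and are smooth, so they lie in $L^2_{\gamma+2}$ as well. The same reasoning applies to the quadratic terms in the second fraction: $s_xs_y$ is handled by Lemma \ref{Holder3}, and $s_y(s_x\psi+s_y\theta)$ uses additionally the boundedness of $\psi,\theta$ guaranteed by Lemma \ref{bounded}. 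Since $\|s/\tau\|_\infty$ can be made small, the denominators $s+\tau$ and $(s+\tau)^2$ contribute only smooth bounded multipliers.

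Finally, the inhomogeneous terms are treated exactly as in Lemma \ref{welldefinedF1}. Using $\psi=2k\tau\,\phi_x$, $\theta=2k\tau\,\phi_y$, we expand
\[
\partial_y\bigl(g\rme^{-\rmi(kx+\phi(\varphi))}\bigr)=\bigl[g_y-\rmi g\,\theta/(2k\tau)\bigr]\rme^{-\rmi(kx+\phi(\varphi))},
\]
which lies in $L^2_\beta\subset L^2_{\gamma+2}$ because $g\in W^{2,2}_\beta$ with $\beta>\gamma+2$, $\theta$ is bounded, and $\phi$ is bounded by Lemma \ref{phasebounded}. The last term $s_y\,\mathrm{Im}[g\rme^{-\rmi(kx+\phi)}]/(s+\tau)^2$ is controlled by the product of $s_y\in L^2_{\gamma+1}$ and a bounded function of $g$ localized in $L^2_\beta$. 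The main obstacle, as in the $\hat F_2$ case, is not any single estimate but the bookkeeping required to separate the explicit logarithmic far-field contributions from the genuinely localized nonlinear remainder; once this separation is made, every term falls in $L^2_{\gamma+2}$, which concludes the proof.
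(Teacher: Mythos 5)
Your proposal is correct and is exactly the argument the paper has in mind: the paper's own proof of this lemma simply states that it is ``almost identical'' to the proof for $\hat{F}_2$ and omits the details, and what you have written is precisely that mirrored argument with $x$-derivatives replaced by $y$-derivatives, $\psi$ by $\theta$, and $P_2,\partial_{xx}P_1$ by $P_3,\partial_{xy}P_1$. The only cosmetic point is that the membership of the linear part in $L^2_{\gamma+2}$ follows from the well-definedness of $\hat T$ as a map into $\mmR$ (Lemma \ref{extraM} and the definition of $\mmY$) rather than from its invertibility in Corollary \ref{FredholmT3}, but this does not affect the argument.
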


\begin{proof} The proof is almost identical to the proof of Lemma \ref{welldefinedF1} and is omitted here.
\end{proof}

Finally, we show

\begin{lem}
The component $\hat{F}_4 : \mmD\times\bbR^3 \rightarrow W^{-2,2}_{\gamma+2}$ is well defined. Moreover, the nonlinear part of $\hat{F}_4$ actually belongs to $L^2_{\gamma+2}$.
\end{lem}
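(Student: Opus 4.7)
The plan is to decompose $\hat{F}_4$ into its linear part $\Delta u - 2\tau^2 u - \psi_{xx}$ and a nonlinear remainder. After substituting the Ansatz \eqref{changevariables}, the linear part equals exactly the fourth row of $\hat{T}$ applied to $(\hat{s},\hat{\psi},\hat{\theta},\hat{u},\hat{v},\hat{w},\hat{c})$: the identity $aP_1 + \partial_x P_2 \equiv 0$, which is built into the definitions via the choice $\alpha = a/(a-b)$, collapses the naive $\hat{c}$-coefficient $\Delta\partial_{xx}P_1 - a\partial_{xx}P_1 - \partial_{xxx}P_2$ down to $\Delta\partial_{xx}P_1$. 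By Corollary \ref{FredholmT3} this lies in $\mmR\subset W^{-2,2}_{\gamma+2}$. Consequently, the task reduces to checking that the nonlinear remainder belongs to $L^2_{\gamma+2}$, which embeds continuously into $W^{-2,2}_{\gamma+2}$.

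For the nonlinear remainder I would group the terms by the estimate invoked. Terms of the form (bounded) $\times$ ($L^2_{\gamma+2}$-function)---namely $u\psi/\tau$, $u(\psi^2+\theta^2)/(2k\tau)^2$, $(s/\tau)\psi_{xx}$, $(s+\tau)(\psi_{xx}\psi+\psi_{xy}\theta)/(2k\tau)^2$, $3s^2 u$, and $6\tau s u$---are handled via the $L^\infty$ bounds on $s,\psi,\theta$ given by Lemma \ref{bounded}, together with the facts $u,\psi_{xx},\psi_{xy}\in L^2_{\gamma+2}$ inherent in $\xi\in\mmD$. Quadratic products of first derivatives---$(s_x)^2$, $s_x\psi_x/\tau$, $s_x(\psi_x\psi+\psi_y\theta)/(2k\tau)^2$, and $(s+\tau)|\nabla\psi|^2/(2k\tau)^2$---are treated by Lemma \ref{Holder3}: Remark \ref{betters} promotes $\hat{s}$ to $M^{2,2}_\gamma$, whence the analogue of Lemma \ref{littleimbedding} places $s_x,\psi_x,\psi_y\in W^{1,2}_{\gamma+1}$, and the bilinear estimate delivers the $L^2_{\gamma+2}$ bound, with the extra bounded factors $\psi,\theta,s+\tau$ absorbed into the constant.

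For the inhomogeneity term I would expand
\[
\partial_{xx}\bigl(g\,\rme^{-\rmi(kx+\phi)}\bigr) = \bigl[g_{xx} - 2\rmi g_x(k+\phi_x) - \rmi g\,\phi_{xx} - g(k+\phi_x)^2\bigr]\rme^{-\rmi(kx+\phi)},
\]
using $\phi_x = \psi/(2k\tau)$ plus the logarithmic contribution from $P_2$ encoded in \eqref{defphi}, and $\phi_{xx}$ proportional to $\psi_x$. Since $g\in W^{2,2}_\beta$ with $\beta>\gamma+2$, the embedding $\langle\mathbf{x}\rangle^\beta g\in W^{2,2}\hookrightarrow L^\infty$ in dimension two yields the pointwise decay $|g(\mathbf{x})|\leq C\langle\mathbf{x}\rangle^{-\beta}$, so the surplus weight $\langle\mathbf{x}\rangle^{\gamma+2-\beta}$ is bounded; combined with $\psi\in L^\infty$ and $\psi_x\in L^2_{\gamma+1}\subset L^2$, each piece lands in $L^2_{\gamma+2}$. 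The principal technical obstacle is the $|\nabla\psi|^2$ term, whose placement in $L^2_{\gamma+2}$ uses precisely the sharp two-dimensional Sobolev--H\"older estimate of Lemma \ref{Holder3}; the rest is routine bookkeeping parallel to the proof of Lemma \ref{welldefinedF1} for $\hat{F}_2$, the one new feature being that we need only land in $W^{-2,2}_{\gamma+2}$ rather than $L^2_{\gamma+2}$, which is what allows the linear terms $\Delta u$ and $\psi_{xx}$ to be absorbed.
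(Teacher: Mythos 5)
Your overall architecture matches the paper's: isolate the linear part $\Delta u - 2\tau^2 u - \psi_{xx}$, observe that after substituting \eqref{changevariables} it collapses to the fourth row of $\hat{T}$ (your identity $aP_1+\partial_xP_2\equiv 0$, equivalent to $\alpha=a/(a-b)$, is exactly the cancellation the paper uses implicitly, and you state it more explicitly than the paper does), and then push the nonlinear remainder into $L^2_{\gamma+2}$ using Lemmas \ref{bounded}, \ref{Holder3} and Remark \ref{betters}. The treatment of the inhomogeneity is also consistent with the paper.

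There is, however, a genuine gap in how you handle the terms containing second derivatives of $\psi$ and first derivatives of $s,\psi$. You assert that $\psi_{xx},\psi_{xy}\in L^2_{\gamma+2}$ ``inherent in $\xi\in\mmD$'' and that $s_x,\psi_x,\psi_y\in W^{1,2}_{\gamma+1}$. This is true only for the \emph{hatted} variables. The full variables carry the far-field corrections, e.g.\ $\psi_{xx}=\hat\psi_{xx}+\hat{c}\,\partial_{xx}P_2$ with $\partial_{xx}P_2\sim |\mathbf{x}|^{-3}$, and
\[
\int \bigl|\partial_{xx}P_2\bigr|^2\langle\mathbf{x}\rangle^{2(\gamma+2)}\,\rmd x\,\rmd y \sim \int^\infty r^{2\gamma-1}\,\rmd r = \infty \quad\text{for }\gamma>0,
\]
so $\psi_{xx}\notin L^2_{\gamma+2}$; likewise $\langle\mathbf{x}\rangle^{\gamma+1}\partial_xP_1\notin W^{1,2}$. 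Consequently your ``(bounded) $\times$ ($L^2_{\gamma+2}$-function)'' scheme fails as stated for $s\psi_{xx}/\tau$ and $(s+\tau)(\psi_{xx}\psi+\psi_{xy}\theta)$, and Lemma \ref{Holder3} cannot be applied directly to the unhatted products $s_x\psi_x$, $(s_x)^2$, $|\nabla\psi|^2$. The conclusion is still correct, but only because the companion factors themselves decay: one must split each product into hatted parts (where your lemmas apply), cross terms like $\hat{s}\,\partial_{xx}P_2$ (controlled by the pointwise decay $|\hat{s}|\lesssim\langle\mathbf{x}\rangle^{-\gamma-1}$ from Lemma \ref{decay}), and pure correction terms like $P_1\partial_{xx}P_2\sim|\mathbf{x}|^{-4}$, which lie in $L^2_{\gamma+2}$ precisely because $\gamma<1$. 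This is the computation the paper singles out as ``the most dangerous term'' in the proof of Lemma \ref{welldefinedF1}, and it is the one piece of bookkeeping your proposal omits rather than merely compresses.
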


\begin{proof} We can rewrite $\hat{F_4}$ as
\[ \Delta u - 2 \tau^2 u - \psi_{xx} - \frac{ s \psi_{xx}}{\tau} -u\left( \frac{\psi}{\tau} + \frac{1}{(2k\tau)^2} (\psi^2 + \theta^2)\right)  -2 s_x \left( \frac{\psi_x}{\tau} + \frac{2}{(2k\tau)^2} (\psi_x \psi + \psi_y \theta) \right) \]
\[ -(s + \tau) \left(   \frac{2}{(2k\tau)^2} \left( \psi_{xx} \psi + \psi_{xy} \theta + | \nabla \psi|^2 \right) \right)  - ( 6s(s_x)^2 + 3s^2 u + 6 \tau (s_x)^2 + 6\tau s u) + \eps \Real[ \partial_{xx}( g \rme^{-\rmi(kx +\phi(\varphi))})].\]

Notice that
\[ \Delta u - 2 \tau^2 u - \psi_{xx} = \Delta\hat{u} - 2 \tau^2 \hat{u} - \hat{\psi}_{xx} + c \Delta( \partial_{xx} P_1), \]
 is just the fourth component of the linear operator $\hat{T}$, thus well defined. Furthermore, notice that $\hat{\psi}_{xx} \in L^2_{\gamma+2}$. Since $\Delta \partial_{xx} P_1$ behaves like $\dfrac{1}{|{\bf x}|^5}$ for large $|{\bf x}|$ and is bounded in compact sets,  these two term now also belong to $L^2_{\gamma+2}$.
The arguments used to show that the remaining nonlinearites are in the space $L^2_{\gamma+2}$ are the same as the once used in the above lemmas, we will omit the details here.
\end{proof}

Having shown the result for the operator $\hat{F}_4$ it is not hard to see that the operators $\hat{F}_5,\hat{F}_6 : \mmD\times \bbR^3 \rightarrow W^{-2,2}_{\gamma+2}$ are well defined.

\begin{rem}\label{betteru}
Since all the nonlinear terms are in $L^2_{\gamma+2}$, including $\hat{\psi}_{xx}$ and $\Delta \partial_{xx} P_1$, then $\Delta\hat{u} - (2\tau^2)\hat{u} \in L^2_{\gamma+2}$. This implies that for the solution, $\hat{u} \in W^{2,2}_{\gamma+2}$. The same observation holds for  $\hat{v}$ and $\hat{w}$.
\end{rem}

In the next lemma we show that there exist a neighborhood $\mmU$ of $\xi \in \mmD \times \bbR$ such that the operator $\hat{F}_{\eps, \varphi}: \mmU \times \bbR^3 \rightarrow \mmR$ is smooth.

\begin{lem}\label{differentiable}
Let  $0< \gamma<1$ and  $g \in W^{2,2}_{\beta}$, with $\beta> \gamma+2$. Then the operator $\hat{F}_{\eps, \varphi} : \mmD \times\bbR^3 \rightarrow \mmR$ is of class $C^\infty$ in a neighborhood the origin.
\end{lem}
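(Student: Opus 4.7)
The plan is to decompose $\hat F_{\eps,\varphi}$ into the bounded linear operator $\hat T$ plus a sum of ``manageable'' nonlinear terms, each of the following three types: (a) polynomial monomials in the coordinates $(\hat s, \hat\psi, \hat\theta, \hat u, \hat v, \hat w, c)$ and their derivatives, (b) rational factors $(s+\tau)^{-1}$ and $(s+\tau)^{-2}$, and (c) the forcing contribution $\eps \partial^\alpha(g\, \rme^{-\rmi(kx+\phi)})$ for $|\alpha|\leq 2$. Smoothness of $\hat T$ is trivial once boundedness has been verified, which is essentially done in Section \ref{sectionlinear}.

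For (a), the estimates that establish well-definedness in the preceding lemmas are in fact bilinear/multilinear bounds between the relevant spaces, hence automatically yield $C^\infty$ dependence on $\xi$. The three tools are the embedding $W^{2,2}_\gamma\hookrightarrow L^\infty$ (Lemma \ref{bounded}) which handles products with $\hat s, \hat\psi, \hat\theta$ and their associated far-field profiles $P_i$; the bilinear estimate of Lemma \ref{Holder3} which controls products of first derivatives in $L^2_{\gamma+2}$; and the trivial fact that multiplication by $L^\infty$ preserves every $L^2_\delta$. For (b), one observes that on the open subset $\{\|s\|_{L^\infty}<\tau/2\}\subset \mmD\times\bbR$, the map $s\mapsto (s+\tau)^{-m}$ factors through the analytic map $z\mapsto (z+\tau)^{-m}$ from $\{|z|<\tau/2\}\subset\bbC$ to $\bbC$; its power series converges in $L^\infty$, providing a $C^\infty$ (in fact real-analytic) map into $L^\infty$. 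Multiplying the corresponding numerator, which lies in $L^2_{\gamma+2}$ by step (a), by such an $L^\infty$ factor preserves the target and inherits smoothness.

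The main obstacle is (c), because of the unbounded logarithmic contribution to $\phi$. Write $\phi = \phi_\mathrm{bd} + \phi_\mathrm{log}$ as in \eqref{defphi}, so that $\phi_\mathrm{log} = (2k\tau)^{-1}\hat c\,\chi\log(\alpha x^2+y^2)$ with $\hat c$ the far-field parameter entering through \eqref{changevariables}. The affine map $(\hat\psi,\hat\theta,\varphi)\mapsto \phi_\mathrm{bd}$ is bounded into $BC^0$ (Lemma \ref{phasebounded}), and composition with the entire function $z\mapsto \rme^{-\rmi z}$ gives a $C^\infty$ map $\phi_\mathrm{bd}\mapsto \rme^{-\rmi\phi_\mathrm{bd}}$ into $BC^0$, so that $g\,\rme^{-\rmi(kx+\phi_\mathrm{bd})}$ depends smoothly on $(\hat\psi,\hat\theta,\varphi)$ in $L^2_{\gamma+2}$ since $g\in W^{2,2}_\beta\subset L^2_{\gamma+2}$. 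The delicate step is smoothness in $\hat c$ of the factor $\rme^{-\rmi \hat c\, h}$ with $h(\mathbf x)=(2k\tau)^{-1}\chi\log(\alpha x^2+y^2)$ unbounded; the formal $n$-th $\hat c$-derivative is $(-\rmi h)^n g\,\rme^{-\rmi \hat c h}$, which we need to be bounded in $L^2_{\gamma+2}$. The hypothesis $\beta>\gamma+2$ is designed precisely for this: since $|h(\mathbf x)|\lesssim \log\langle\mathbf x\rangle$ and logarithms are dominated by any positive power, $\langle\mathbf x\rangle^{\gamma+2}|h|^n g\in L^2$ with bounds growing only polynomially in $n$, and the Taylor series in $\hat c$ converges in $L^2_{\gamma+2}$ on a neighborhood of $\hat c=0$. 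The derivatives $\partial^\alpha$ in $\hat F_4,\hat F_5,\hat F_6$ produce at most first derivatives of $g$ (still in $L^2_\beta$) together with first and second derivatives of $\phi$; the latter read as $\psi,\theta\in L^\infty$ and $\psi_x,\psi_y,\theta_x,\theta_y\in W^{1,2}_{\gamma+1}$, which combine via Lemma \ref{Holder3} with $g\rme^{-\rmi(kx+\phi)}$ and its derivatives to land in $L^2_{\gamma+2}\hookrightarrow W^{-2,2}_{\gamma+2}$. Assembling the three pieces, $\hat F_{\eps,\varphi}$ is a finite sum of compositions and products of $C^\infty$ maps, hence $C^\infty$ on an open neighborhood of the origin in $\mmD\times\bbR^3$.
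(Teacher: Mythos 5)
Your proof is correct and follows essentially the same route as the paper: treat the algebraic/rational terms as smooth superposition and multilinear operations through $L^\infty$ and the weighted spaces, and isolate $g\,\rme^{-\rmi\phi}$, trading the excess localization $\beta-\gamma-2$ of $g$ against the logarithmic growth of the $\hat c$-derivatives of $\rme^{-\rmi\phi_{\log}}$ (the paper packages this as the factorization $G_1G_2G_3$, you do it directly on $\hat c\mapsto g\,\rme^{-\rmi\hat c h}$). One quantitative slip: the constants in $(\log\langle\mathbf{x}\rangle)^n\leq C_n\langle\mathbf{x}\rangle^{\beta-\gamma-2}$ grow like $n!/(\beta-\gamma-2)^n$, not polynomially in $n$, but this is harmless since the $1/n!$ in the Taylor expansion still yields a positive radius of convergence, and $C^\infty$ only requires each individual derivative $(-\rmi h)^n g\,\rme^{-\rmi\hat c h}$ to lie in $L^2_{\gamma+2}$, which your domination argument does give.
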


\begin{proof} Most nonlinear terms are defined via superposition (or Nemytskii) operators, via smooth algebraic functions, that are automatically smooth once well defined. We therefore concentrate on the term $g \rme^{-\rmi\phi}$ and its derivatives. Recall that
\[  \phi(x,y; \eps, \varphi) = \phi_\mathrm{bd}+ \phi_{\log}, \]
where
\begin{align*}
 \phi_\mathrm{bd}(x,y;\eps,\varphi) = &\varphi + \frac{1}{2k\tau}\int_{t=0}^1 \left(\hat{\psi}(tx,ty;\eps)x+  \hat{\theta} (tx,ty;\eps)y\right) \\
 \phi_{\log}(x,y;\eps,\varphi) =& \frac{1}{2k\tau} \int_{t=0}^1\left( P_2(tx,ty)x+  P_3 (tx,ty)y\right) =  \frac{1}{2k\tau} c \chi \log(\alpha x^2+y^2).
\end{align*}
In order to show smoothness, we factor
\[
g \rme^{-\rmi\Phi}=\left(g\langle\mathbf{x}^{\gamma+2-\beta}\rangle\right)\cdot\rme^{-\rmi\Phi_\mathrm{bd}} \cdot \left(\langle\mathbf{x}^{\beta-\gamma-2}\rangle \rme^{-\rmi\Phi_\mathrm{log}}\right)=:G_1\cdot G_2 \cdot G_3.
\]
Clearly, $G_1\in L^2_{\gamma+2}$. By Lemma \ref{phasebounded}, $\int\psi,\int\theta\in L^\infty$, so that $G_2\in L^\infty$ is bounded as a superposition operator. It remains to show that $G_3$ is differentiable with values in $L^\infty$. This can be readily established, showing that the derivative with respect to $c$ is 
\[
\partial_cG_3=\langle\mathbf{x}^{\beta-\gamma-2}\rangle  \rme^{-\rmi\Phi_\mathrm{log}}\chi \log(\alpha x^2+y^2),
\]
hence bounded in $L^\infty$. Higher derivatives are bounded for the same reasons, which establishes the claim.
\end{proof}

\section{Expansions and proof of main result}\label{Mainresult}

In this last subsection we use Theorem \ref{MainTh} to proof Theorem \ref{MainTh1}  and derive the expansions for the stationary solutions to the perturbed Ginzburg-Landau equation near roll patterns. 
\begin{proof}[of Theorem \ref{MainTh1}] Recall the  Ansatz
\[ A(x,y; \eps, \varphi) = (\sqrt{1 -k^2} + s(x,y;\eps, \varphi)+ c(\eps, \varphi) P_1(x,y)) \rme^{\rmi kx + \rmi \phi(x,y;\eps, \varphi)+ \rmi \frac{c(\eps, \varphi)}{2k \sqrt{1-k^2}}P_2(x,y)} \]
where $\Phi$ was defined in \eqref{defphi} and $P_1,P_2$ in \eqref{e:p1p2}.
%
From Theorem \ref{MainTh} we know there exists  a neighborhood, $\mmU$, of $\mmD \times \bbR$ where the operator $\hat{F}_{\eps,\varphi}$ is continuously differentiable with invertible derivative at the origin, $\eps=0$. The Implicit Function Theorem therefore guarantees the existence of solutions $\xi(\eps, \varphi) $ near $\xi(0, \varphi)=0$. In particular, we know that $s \in W^{2,2}_{\gamma}$, and $\psi, \theta \in M^{2,2}_{\gamma}$.

We define 
\[ S(x,y; \eps, \varphi) = \left(\sqrt{1-k^2} + s(x,y;\eps, \varphi) + c(\eps, \varphi)  P_1(x,y) \right)\]
and
\[ \Phi( x,y;\eps, \varphi) =kx + \phi. \]

Since $s(x,y;\eps, \varphi) \in W^{2,2}_{\gamma}$, Lemma \ref{bounded}  ensures that if $s(x,y; \eps, \varphi) \sim \rmO(\langle {\bf x} \rangle^{-\gamma})$.  Also,  by definition, $P_1(x,y) \sim \rmO(\langle {\bf x} \rangle^{-1})$, and
\[ \lim_{\mathrm{x}\to\infty} S(x,y;\eps, \varphi) =S_{\infty}= \sqrt{1-k^2}.\]

By Lemma \ref{phasebounded}, $\phi_\mathrm{bd} \to\varphi + \Phi_{\infty}(\epsilon)$ for $\mathbf{x}\to\infty$ so that 
\[ \Phi(x,y;\eps,\varphi)-kx-\frac{c(\eps,\varphi)}{2k \sqrt{1-k^2}}\log(\alpha x^2+y^2)\to  \Phi_\infty(\eps)+\varphi,\]
as $|{\bf x}| \to \infty$.

To find an expression for $c(\eps, \varphi)$ we expand $\xi = \eps \hat{\xi} + \rmo(\eps)$. Gathering terms of order $\eps$ results in the  system $\hat{T} \hat{\xi} = \hat{f}$. Inspecting the second component of this system, we find
\[ \Delta \hat{\psi} + b \partial_{xx} \hat{u} + \hat{c} \left[\Delta P_2 + b \partial_{xx} P_1  \right] = \dfrac{1}{\sqrt{1-k^2}}\Imag[	 ( g_x-ikg) \rme^{-\rmi(kx+\varphi)}].\]
Taking the scalar product with $x$ and solving for $\hat{c}$, we obtain after integration by parts in $x$,
\[ \hat{c} =\frac{\sqrt{1-3k^2}}{\pi(1-k^2)} \iint \Imag[ g\rme^{-\rmi( kx+ \varphi)}].
\]
Hence $c( \eps, \varphi) = \eps c_1( \varphi) + \rmo(\eps)$ with $c_1(\varphi) = \hat{c}$.

\end{proof}

\section{Appendix}

\begin{lem}\label{decay}
If $f \in M^{2,2}_{\gamma}$ then $|f| \leq  {C} \| f\|_{M^{2,2}_{\gamma}}  \langle {\bf x} \rangle^{-\gamma-1}$ as $| {\bf x} |\rightarrow \infty$.
\end{lem}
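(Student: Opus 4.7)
The plan is a standard rescaling argument exploiting the fact that the weights in $M^{2,2}_\gamma$ increase with each derivative in exactly the right way to make the norm invariant (up to a factor of $R^{-\gamma-n/p}$) under a dilation by a factor $R$. For a point $\mathbf{x}_0$ with $R=|\mathbf{x}_0|$ large, I would work on the ball $B=B(\mathbf{x}_0, R/2)$, where $\langle\mathbf{x}\rangle\sim R$ uniformly, and rescale to the unit ball $B_{1/2}$ via $g(\mathbf{y})=f(\mathbf{x}_0+R\mathbf{y})$.

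Under this rescaling we have $D^\alpha g(\mathbf{y})=R^{|\alpha|}(D^\alpha f)(\mathbf{x}_0+R\mathbf{y})$, and a change of variables (using $n=2$) gives
\[
\|D^\alpha g\|_{L^2(B_{1/2})} = R^{|\alpha|-1}\|D^\alpha f\|_{L^2(B)}.
\]
Since $\langle\mathbf{x}\rangle\sim R$ on $B$, one has $\|D^\alpha f\|_{L^2(B)}\leq C R^{-(\gamma+|\alpha|)}\|\langle\mathbf{x}\rangle^{\gamma+|\alpha|}D^\alpha f\|_{L^2}$, and thus
\[
\|D^\alpha g\|_{L^2(B_{1/2})}\leq C R^{-1-\gamma}\|f\|_{M^{2,2}_\gamma},\qquad |\alpha|\leq 2.
\]
Summing over $|\alpha|\leq 2$ produces $\|g\|_{W^{2,2}(B_{1/2})}\leq C R^{-1-\gamma}\|f\|_{M^{2,2}_\gamma}$. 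The uniform scaling in $|\alpha|$ is the whole point of the Kondratiev norm: it is precisely why all three terms combine into the single decay rate $R^{-1-\gamma}$.

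To finish, I would apply the Sobolev embedding $W^{2,2}(B_{1/2})\hookrightarrow L^\infty(B_{1/2})$, valid in dimension two since $2>n/2=1$. This yields $|g(0)|\leq C\|g\|_{W^{2,2}(B_{1/2})}\leq C R^{-1-\gamma}\|f\|_{M^{2,2}_\gamma}$, i.e.\ $|f(\mathbf{x}_0)|\leq C|\mathbf{x}_0|^{-1-\gamma}\|f\|_{M^{2,2}_\gamma}$ for all $|\mathbf{x}_0|$ large, which is equivalent to the claimed bound with $\langle\mathbf{x}\rangle^{-\gamma-1}$.

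The only mildly delicate points are: making sure the Sobolev embedding into $L^\infty$ can be taken with a constant independent of $R$ (which is immediate since we work on the fixed domain $B_{1/2}$ after rescaling), and the bookkeeping that the three scaling factors $R^{|\alpha|-1}$ and $R^{-(\gamma+|\alpha|)}$ combine uniformly in $|\alpha|$—this is really where one sees that the Kondratiev weight is tuned to the natural scaling, and it is the place one would expect trouble for an ordinary weighted Sobolev space $W^{2,p}_\gamma$.
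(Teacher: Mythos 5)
Your argument is correct, but it is not the route the paper takes. You use the scaling covariance of the Kondratiev norm: dilate a ball $B(\mathbf{x}_0,R/2)$ to the fixed ball $B_{1/2}$, observe that the factor $R^{|\alpha|-1}$ from the change of variables combines with the weight factor $R^{-(\gamma+|\alpha|)}$ to give the single rate $R^{-1-\gamma}$ uniformly in $|\alpha|$, and then invoke the local embedding $W^{2,2}(B_{1/2})\hookrightarrow L^\infty(B_{1/2})$ with an $R$-independent constant. The paper instead works in polar coordinates: writing $f(\theta,R)=-\int_R^\infty f_r(\theta,s)\,\rmd s$ for $f\in C^\infty_0$ and applying Cauchy--Schwarz with a weight splitting, it derives $\|f(\cdot,R)\|_{L^2(\rmd\theta)}\lesssim R^{-\gamma-1}\|\nabla f\|_{L^2_{\gamma+1}}$ and $\|f_\theta(\cdot,R)\|_{L^2(\rmd\theta)}\lesssim R^{-\gamma-2}\|f_{r\theta}\|_{L^2_{\gamma+2}}$, and then concludes with the one-dimensional interpolation (Agmon) inequality $\|f(\cdot,R)\|_\infty^2\leq\|f(\cdot,R)\|_{L^2}\|f(\cdot,R)\|_{H^1}$ on each circle. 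Both proofs exploit exactly the feature you emphasize, namely that the weight grows by one power per derivative; your version is arguably cleaner, localizes the Sobolev embedding in a standard way, and generalizes immediately to other dimensions and exponents (any $s,p$ with $sp>n$), whereas the paper's version only uses first derivatives and the mixed second derivative of $f$ and produces the intermediate circle-averaged estimates as a by-product. One small point of hygiene that both proofs share: since $M^{2,2}_\gamma$ is defined as a completion of $C^\infty_0$, you should note (as the paper does) that it suffices to prove the uniform estimate for smooth compactly supported $f$ and pass to the limit, pointwise evaluation being justified by the local embedding into continuous functions.
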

\begin{proof} Since $M^{2,2}_{\gamma}$ is the completion of $C^{\infty}_0$ under the norm $\| \cdot\|_{M^{2,2}_{\gamma}}$, it suffices to show that the result holds for $f \in C^{\infty}_0$. In polar coordinates, we have, up to constants 
  \begin{align*}
  \int |f(\theta, R)|^2 \rmd\theta &\leq \int \left( \int_{\infty}^R |f_r(\theta, s)| \rmd s \right)^2  \rmd\theta
  =\int \left( \int_{\infty}^R s^{-\gamma -3/2} s^{\gamma+1} | f_r(\theta, s)|  s^{1/2} \rmd s \right)^2  \rmd\theta\\
  & \leq \int \left(\int_{\infty}^R s^{-2(\gamma + 3/2)} \rmd s \right) \left( \int_{\infty}^R s^{2(\gamma+1)} |f_r(\theta,s)|^2 s\rmd s \right)  \rmd\theta \\
  &\lesssim R^{-2(\gamma+3/2)+1} \int \int_{\infty}^R s^{2( \gamma+1)}|f_r(\theta,s)|^2 s\rmd s\rmd\theta ,
  \end{align*}
which gives
\begin{equation}\label{es1} \| f(\cdot, R)\|_{L^2} \lesssim R^{-\gamma-1}\|
 \nabla f\|_{L^2_{\gamma+1}}.
\end{equation}
Similarly,
\begin{align*}
\int |f_{\theta}(\theta, R) |^2\rmd\theta &\leq \int \left( \int_{\infty}^R |f_{r \theta}(\theta,s)|\rmd s \right)^2 \rmd\theta
= \int \left( \int_{\infty}^R s^{-\gamma -5/2} s^{\gamma+2} |f_{r \theta}(\theta,s)|  s^{1/2}\rmd s \right)^2 \rmd\theta\\
& \leq \int \left( \int_{\infty}^R s^{-2(\gamma+5/2)} \rmd s \right) \left( \int_{\infty}^R  s^{2(\gamma+2)} |f_{r \theta}(\theta,s)|^2  s\rmd s \right) \rmd\theta \\
& \lesssim R^{-2(\gamma+5/2)+1} \int  \int_{\infty}^R  s^{2(\gamma+2)} |f_{r \theta}(\theta,s)|^2  s\rmd s  \rmd\theta.
\end{align*}
This gives
\begin{equation}\label{es2} \| f_\theta(\cdot, R)\|_{L^2}\leq  R^{-\gamma-2} \| f_{r\theta} \|_{L^2_{\gamma+2}}.\end{equation}
Combining \eqref{es1} and \eqref{es2} and using the interpolation inequality \cite[Thm 5.9]{adams}
\[
\|f(\cdot,R)\|^2_\infty\leq \|f(\cdot,R)\|_{L^2} \|f(\cdot,R)\|_{H^1},
\]
now proves the claim.
\end{proof}

%
%
%

 \bibliographystyle{siam}	
\bibliography{kondratiev}	
\end{document}